\tikzstyle{v}=[circle, draw, fill=white,
\tikzset{>=stealth}
\newcommand{\vc}[1]{\rule[-3ex]{0pt}{0pt}\rule[4ex]{0pt}{0pt} \raisebox{\dimexpr-.5\height+.5\ht\strutbox\relax}{#1}}
\newtheorem{thm}{Theorem}[section]
\newtheorem{prop}[thm]{Proposition}
\newtheorem{cor}[thm]{Corollary}
\newtheorem{lemma}[thm]{Lemma}
\theoremstyle{definition}
\newtheorem*{defn}{Definition}
\newtheorem{ex}[thm]{Example}
\theoremstyle{remark}
\newtheorem{rmk}[thm]{Remark}
\newcommand{\FK}{\mathcal E}
\newcommand{\RR}{\mathbf R}
\newcommand{\QQ}{\mathbf Q}
\newcommand{\Hilb}{\mathcal H}
\renewcommand{\S}{\mathfrak S}
\DeclareMathOperator{\supp}{supp}
\begin{document}
\title{On the commutative quotient of Fomin-Kirillov algebras}
\author{Ricky Ini Liu}
\address{Department of Mathematics, North Carolina State University, Raleigh, NC }
\email{riliu@ncsu.edu}

\begin{abstract}
The Fomin-Kirillov algebra $\FK_n$ is a noncommutative algebra with a generator for each edge in the complete graph on $n$ vertices. For any graph $G$ on $n$ vertices, let $\FK_G$ be the subalgebra of $\FK_n$ generated by the edges in $G$.
We show that the commutative quotient of $\FK_G$ is isomorphic to the Orlik-Terao algebra of $G$. As a consequence, the Hilbert series of this quotient is given by $(-t)^n \chi_G(-t^{-1})$, where $\chi_G$ is the chromatic polynomial of $G$. We also give a reduction algorithm for the graded components of $\FK_G$ that do not vanish in the commutative quotient and show that their structure is described by the combinatorics of noncrossing forests.
\end{abstract}

\maketitle

\section{Introduction}

Fomin and Kirillov \cite{FK} introduced a noncommutative algebra $\FK_n$ for the purpose of understanding the generalized Littlewood-Richardson problem of computing intersection numbers in the flag variety. Since then, this algebra and its generalizations have been studied extensively elsewhere: see, for instance, \cite{Bazlov, BLM, FP, Kirillov, KirillovMaeno, Lenart, LenartMaeno, Majid, MPP, MS, P, Vendramin}. Unfortunately, many key facts about $\FK_n$, such as its Hilbert series, remain unknown for most values of $n$.  In order to further the study of $\FK_n$, the authors of \cite{BLM} describe a subalgebra of $\FK_G \subset \FK_n$ for any graph $G$ on $n$ vertices and discuss its properties. 

Fomin and Kirillov \cite{FK} attribute to Varchenko the observation that the commutative quotient of $\FK_n$ has dimension $n!$ and Hilbert series $(1+t)(1+2t) \cdots (1+(n-1)t)$. In this paper, we will show that an analogous result holds for the commutative quotient $\FK_G^{ab}$ of $\FK_G$. Specifically, we will show that $\FK_G^{ab}$ is isomorphic to the Orlik-Terao algebra $U_G$ (defined in \cite{OT}), which is known to have Hilbert series $(-t)^n \chi_G(-t^{-1})$. This resolves a conjecture of Kirillov \cite{Kirillov}.

We also discuss the structure of the graded components of $\FK_G$ that do not vanish in the commutative quotient. This can be thought of as giving a sort of noncommutative analogue of the Orlik-Terao algebra. We show that one can describe a basis for these components in terms of certain noncrossing forests. We also give a reduction algorithm for these components and show that this reduction gives a unique expression of any element in terms of basis elements (independent of the choices made during the reduction). This reduction is similar to the reductions in subdivision algebras given by M\'esz\'aros in \cite{Meszaros}.

We begin in Section 2 with the proof that $\FK_G^{ab}$ is isomorphic to the Orlik-Terao algebra $U_G$. In Section 3, we discuss the combinatorics of noncrossing trees and their relationship to the structure of certain graded components of $\FK_G$. Finally, we conclude in Section 4 with some brief final remarks and observations.

\section{Fomin-Kirillov algebras}

We begin with some preliminaries about the Fomin-Kirillov algebras $\FK_G$. For more information, see \cite{BLM, FK}.

\begin{defn}
	The \emph{Fomin-Kirillov algebra} $\FK_n$ is the quadratic algebra (say, over $\QQ$) with generators
	$x_{ij}=-x_{ji}$ for $1 \leq i < j \leq n$ with the following relations:
	\begin{itemize}
		\item $x_{ij}^2 = 0$ for distinct $i,j$;
		\item $x_{ij}x_{kl} = x_{kl}x_{ij}$ for distinct $i,j,k,l$;
		\item $x_{ij}x_{jk}+x_{jk}x_{ki}+x_{ki}x_{ij}=0$ for distinct $i,j,k$.
	\end{itemize}
	
	For any graph $G$ with vertex set $[n]$, the \emph{Fomin-Kirillov algebra} ${\FK_G}$ of $G$ is the subalgebra of $\FK_n$ generated by $x_{ij}$ for all edges $\overline{ij}$ in $G$.
\end{defn}

In particular, $\FK_n = \FK_{K_n}$, where $K_n$ is the complete graph with vertex set $[n]$. Note that since the set of relations of $\FK_n$ is fixed by relabelings of the vertex set, the structure of $\FK_G$ depends only on the structure of the graph $G$ up to isomorphism.

Typically $\FK_G$ will have minimal relations that are not quadratic. The most important relations for our purposes will be the following, proved in \cite{BLM} by a straightforward induction.

\begin{prop}[\cite{BLM}]\label{prop-cycrel}
For distinct $i_1, i_2, \dots, i_m \in [n]$,
\[x_{i_1i_2}x_{i_2i_3}\cdots x_{i_{m-1}i_m} + x_{i_2i_3}x_{i_3i_4}\cdots x_{i_{m-1}i_m}x_{i_mi_1} + \cdots + x_{i_mi_1}x_{i_1i_2}\cdots x_{i_{m-2}i_{m-1}} = 0.\]
\end{prop}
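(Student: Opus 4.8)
The plan is to induct on the cycle length $m$. The cases $m=2$ and $m=3$ are immediate: for $m=2$ the claimed identity reads $x_{i_1i_2}+x_{i_2i_1}=0$, which is antisymmetry, and for $m=3$ it is verbatim the third defining relation of $\FK_n$. For $m\ge 4$ I will reduce the $m$-term cyclic sum to the $(m-1)$-term cyclic sum on the vertices $i_2,\dots,i_m$, which vanishes by the inductive hypothesis.

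It helps to record the triangle relation in the rearranged form $x_{ab}x_{bc}=x_{bc}x_{ac}+x_{ac}x_{ab}$ (for distinct $a,b,c$), and to abbreviate a path monomial $x_{j_1j_2}x_{j_2j_3}\cdots x_{j_{r-1}j_r}$ on distinct indices as $[j_1j_2\cdots j_r]$, with $[j_1]=1$. The key step is a vertex-deletion identity: for distinct $j_1,\dots,j_r$ and an interior position $1<s<r$,
\[
[j_1\cdots j_r]=x_{j_sj_{s+1}}\,[j_1\cdots j_{s-1}j_{s+1}\cdots j_r]+[j_1\cdots j_{s-1}j_{s+1}\cdots j_r]\,x_{j_{s-1}j_s}.
\]
To prove it, write $[j_1\cdots j_r]=[j_1\cdots j_{s-1}]\,x_{j_{s-1}j_s}x_{j_sj_{s+1}}\,[j_{s+1}\cdots j_r]$, apply the rearranged triangle relation to the middle pair, then slide $x_{j_sj_{s+1}}$ leftward past $[j_1\cdots j_{s-1}]$ in the first resulting term and slide $x_{j_{s-1}j_s}$ rightward past $[j_{s+1}\cdots j_r]$ in the second. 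Both moves are legal because all indices along a path monomial are distinct, so the moving factor commutes with every factor it passes; in each term the remaining factors multiply to give precisely the path monomial on $j_1,\dots,j_r$ with $j_s$ omitted. (For $s=1$ or $s=r$ the corresponding statement holds trivially, with a single term.)

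Now let $T_k=[i_ki_{k+1}\cdots i_{k+m-1}]$ (indices taken mod $m$) denote the $k$th summand on the left-hand side, and delete the index $i_1$ from each $T_k$. For $k=1$ and $k=2$ the index $i_1$ is an endpoint of the underlying path, so deletion is trivial: $T_1=x_{i_1i_2}\,[i_2i_3\cdots i_m]$ and $T_2=[i_2i_3\cdots i_m]\,x_{i_mi_1}$. For $3\le k\le m$ the index $i_1$ is interior, with predecessor $i_m$ and successor $i_2$, so the deletion identity gives $T_k=x_{i_1i_2}\,\widehat T_k+\widehat T_k\,x_{i_mi_1}$, where $\widehat T_k=[i_ki_{k+1}\cdots i_mi_2i_3\cdots i_{k-1}]$ is the path on $i_2,\dots,i_m$ that starts at $i_k$ and wraps around. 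The two outer factors $x_{i_1i_2}$ and $x_{i_mi_1}$ do not depend on $k$, and the inner monomials $[i_2i_3\cdots i_m],\widehat T_3,\dots,\widehat T_m$ are exactly the $m-1$ cyclic summands of the asserted identity for the cycle $(i_2,i_3,\dots,i_m)$. Hence the full sum telescopes to $x_{i_1i_2}\cdot S+S\cdot x_{i_mi_1}$, where $S$ is that $(m-1)$-term cyclic sum, and $S=0$ by induction.

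I expect the main obstacle to be the combinatorial bookkeeping in this last step: correctly locating $i_1$ inside each cyclic word $T_k$, verifying that every application of the deletion identity produces the same pair of outer factors $x_{i_1i_2}$ and $x_{i_mi_1}$, and checking that the inner contributions — together with the two endpoint terms $[i_2i_3\cdots i_m]$ coming from $T_1$ and $T_2$ — reassemble into exactly one copy of the $(m-1)$-vertex cyclic sum on each side. The algebra itself (the deletion identity and the commutations it uses) is short once the right auxiliary statement is isolated.
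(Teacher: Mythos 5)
Your proof is correct. The paper does not actually reproduce a proof of this proposition, citing only that it is "proved in \cite{BLM} by a straightforward induction"; your argument supplies exactly such an induction on $m$, with a clean vertex-deletion identity that makes the telescoping transparent, and I see no gaps—the commutations used are legal since all indices along a path monomial are distinct, and the reassembly of $[i_2\cdots i_m],\widehat T_3,\dots,\widehat T_m$ into the full $(m-1)$-term cyclic sum is verified.
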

When $m=3$, this is the third quadratic relation in the definition of $\FK_n$.

\subsection{Grading}

The algebras $\FK_G$ have three natural gradings:
\begin{itemize}
\item Any monomial $P \in \FK_G$ has the usual notion of degree, which we denote $d(P)$.
\item There is a grading with respect to the symmetric group $\S_n$: we define the $\S_n$-degree of $x_{ij}$ to be the transposition $(i\;j) \in \S_n$ and extend to all monomials by multiplicativity. We denote the $\S_n$-degree of $P$ by $\sigma_P$.
\item For any monomial $P \in \FK_G$, let $\supp(P)$ be the subgraph of $G$ with edges $\overline{ij}$ for $x_{ij}$ appearing in $P$. Then we can define a grading on $\FK_G$ by letting $\Pi(P)$ be the set partition of $[n]$ that gives the connected components of $\supp(P)$. For example, $\Pi(x_{12}x_{23}x_{45}x_{31}) = 123|45$.
\end{itemize}
It is easy to check that the relations of $\FK_n$ are homogeneous with respect to all three of these gradings. Note that if $P, Q \in \FK_G$ are homogeneous with respect to all three of these gradings, then $d(PQ) = d(P)+d(Q)$, $\sigma_{PQ} = \sigma_P\sigma_Q$, and $\Pi(PQ)$ is the common coarsening of $\Pi(P)$ and $\Pi(Q)$.

If $G$ and $H$ are graphs with disjoint vertex sets, then by considering $\Pi$-degree, we see that the only relations of $\FK_{G+H}$ are either commuting relations between $\FK_G$ and $\FK_H$, or else they follow from relations within $\FK_G$ and $\FK_H$. Therefore $\FK_{G+H} \cong \FK_G \otimes \FK_H$.

\begin{defn}
	A monomial $P \in \FK_G$ is \emph{simple} if $\sigma_P$ has exactly $n-d(P)$ cycles. We denote by $\FK_G^s$ the quotient of $\FK_G$ by all non-simple monomials in $\FK_G$, and we denote by $\FK_G^\sigma$ the span of all simple monomials of $\S_n$-degree $\sigma$.
\end{defn}
There is a straightforward characterization of simple monomials.
\begin{prop} \label{prop-simple}
A monomial $P \in \FK_G$ is simple if and only if $P$ has no repeated variables and $\supp(P)$ has no cycles. If $P$ is simple, then $\Pi(P)$ is the set partition given by the cycles of $\sigma(P)$. (Hence $\FK_G^\sigma$ is $\Pi$-homogeneous.)
\end{prop}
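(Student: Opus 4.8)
The plan is to reduce Proposition~\ref{prop-simple} to a classical fact about minimal factorizations of permutations into transpositions; the algebra relations of $\FK_n$ play no role here, since ``simple'', $d(P)$, $\sigma_P$, and $\supp(P)$ are all read off the word $P = x_{i_1 j_1} x_{i_2 j_2} \cdots x_{i_d j_d}$ directly. With $d = d(P)$ we have $\sigma_P = (i_1\,j_1)(i_2\,j_2)\cdots(i_d\,j_d)$. Multiplying a permutation by a transposition changes its number of cycles by exactly $\pm 1$ (it merges two cycles when the two transposed elements lie in different cycles, and splits one cycle otherwise), and the identity on $[n]$ has $n$ cycles; hence the partial product $(i_1\,j_1)\cdots(i_k\,j_k)$ has $n - k + 2r_k$ cycles, where $r_k$ is the number of indices $\ell \le k$ at which the count went up. In particular $c(\sigma_P) \ge n - d$, and $P$ is simple exactly when the count drops at every step, i.e.\ when for each $k$ the elements $i_k$ and $j_k$ lie in different cycles of $(i_1\,j_1)\cdots(i_{k-1}\,j_{k-1})$.

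The heart of the argument is the following lemma, which I would prove by induction on $d$: \emph{if the edges $\overline{i_1 j_1}, \dots, \overline{i_d j_d}$, counted with multiplicity, form a forest on $[n]$, then the partition of $[n]$ into cycles of $(i_1\,j_1)\cdots(i_d\,j_d)$ coincides with the partition of $[n]$ into connected components of that forest.} The case $d = 0$ is immediate. For the inductive step, the first $d-1$ edges form a forest and adjoining $\overline{i_d j_d}$ leaves it acyclic, so $i_d$ and $j_d$ lie in distinct components of the smaller forest and hence, by the inductive hypothesis, in distinct cycles of $(i_1\,j_1)\cdots(i_{d-1}\,j_{d-1})$; multiplying by $(i_d\,j_d)$ merges precisely those two cycles, exactly mirroring the merger of the two components. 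Consequently, if $P$ has no repeated variable and $\supp(P)$ is acyclic, then at every step the transposed pair lies in different cycles, so $c(\sigma_P) = n - d$ and $P$ is simple, and moreover $\Pi(P)$ — which is by definition the component partition of $\supp(P)$ — equals the cycle partition of $\sigma_P$.

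For the converse, assume $P$ has a repeated variable or $\supp(P)$ contains a cycle, and let $k$ be least such that $\overline{i_1 j_1}, \dots, \overline{i_k j_k}$ is not a forest. Then $\overline{i_1 j_1}, \dots, \overline{i_{k-1} j_{k-1}}$ is a forest in which $i_k$ and $j_k$ already lie in a common component, so by the lemma they lie in a common cycle of $(i_1\,j_1)\cdots(i_{k-1}\,j_{k-1})$ and the cycle count rises at step $k$; thus $r_d \ge 1$ and $c(\sigma_P) \ge n - d + 2 > n - d$, so $P$ is not simple. This proves the equivalence. Finally, since any simple monomial of $\S_n$-degree $\sigma$ has $\Pi$-degree equal to the cycle partition of $\sigma$, which depends only on $\sigma$, all of $\FK_G^\sigma$ sits in a single $\Pi$-degree, so $\FK_G^\sigma$ is $\Pi$-homogeneous.

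I expect no serious obstacle: once the forest lemma is in place everything is bookkeeping. The one point deserving care is the claim used throughout that multiplication by a transposition $(a\,b)$ merges the cycles through $a$ and $b$ when these are distinct and splits a cycle otherwise — which I would verify once in cycle notation — together with phrasing the forest lemma so that the induction is insensitive to the order in which the edges of the forest happen to be listed (the key observation being that in a forest each successive edge joins two distinct components of the previously listed edges).
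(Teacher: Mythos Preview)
Your argument is correct and follows essentially the same route as the paper: both hinge on the elementary fact that multiplying by a transposition $(i\;j)$ either merges or splits a cycle, so that $\sigma_P$ has $n-d(P)$ cycles precisely when every step is a merge. Your version is more thorough---you state and prove the ``forest lemma'' explicitly and handle both implications separately---whereas the paper argues only the forward direction (simple $\Rightarrow$ forest) in full, leaving the converse and the identification $\Pi(P)=\text{cycles of }\sigma_P$ largely implicit in the phrase ``multiplying by each successive transposition \dots\ must merge two cycles.''
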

\begin{proof}
Multiplying a permutation by a transposition $(i\; j)$ either merges the two distinct cycles containing $i$ and $j$ or splits apart the cycle containing both $i$ and $j$. Thus if $P$ is simple, then multiplying by each successive transposition in $\sigma_P = (i_1\;j_1)\cdots (i_d\;j_d)$ must merge two cycles. Hence $\Pi(P)$ is given by the cycles of $\sigma_P$. Since $P$ has the minimum possible degree given $\sigma_P$, $\supp(P)$ must have the minimum possible number of edges given $\Pi(P)$, so there can be no repeated edges or cycles in $\supp(P)$.
\end{proof}

If $\sigma$ has cycle decomposition $\sigma^{(1)}\sigma^{(2)} \dots \sigma^{(k)}$, then multiplication gives an isomorphism $\bigotimes_i \FK_{G_i}^{\sigma^{(i)}} \cong \FK_G^\sigma$, where $G_i$ is the induced subgraph of $G$ on the vertices in $\sigma^{(i)}$. 

\subsection{Orlik-Terao algebra}

In \cite{OT}, Orlik and Terao define a commutative algebra $U(\mathscr A)$ for any hyperplane arrangement $\mathscr A$. When $\mathscr A$ is specialized to the graphical arrangement corresponding to a graph $G$, we get the following definition.

\begin{defn}
	Let $G$ be a graph on $n$ vertices. The \emph{(Artinian) Orlik-Terao algebra} $U_G$ is the commutative ($\QQ$-)algebra with generators $u_{ij} = -u_{ji}$ for all edges $\overline{ij}$ in $G$ subject to the following relations:
	\begin{itemize}
		\item a monomial $P \in U_G$ vanishes if $P$ has a repeated variable or $\supp(P)$ has a cycle; and 
		\item for any linear dependence
		\[\sum_{p=1}^k c_p \cdot (e_{i_p}-e_{j_p}) = 0 \in \RR^n,\]
		we have the relation 
		\[\sum_{p=1}^k \left(c_p \prod_{q\neq p} u_{i_qj_q}\right) = 0.\]
	\end{itemize}
\end{defn}

Below is a summary of the results of \cite{OT} that we will need. Fix a total order on the edges of $G$, and define a \emph{broken circuit} to be a cycle with its minimum edge removed. 

\begin{thm} [\cite{OT}] \label{thm-ot}
	\begin{enumerate}[(a)]
		\item The relations in $U_G$ are generated by $u_{ij}^2=0$ and
		\[u_{i_1i_2}u_{i_2i_3}\cdots u_{i_{m-1}i_m} + u_{i_2i_3}u_{i_3i_4}\cdots u_{i_{m-1}i_m}u_{i_mi_1} + \cdots + u_{i_mi_1}u_{i_1i_2}\cdots u_{i_{m-2}i_{m-1}} = 0,\]
		where $i_1, i_2, \dots, i_m$ is a cycle in $G$.
		\item The monomials $\prod_{\overline{ij}\in S}u_{ij}$, where $S$ is any subset of edges of $G$ that does not contain a broken circuit, form a basis of $U_G$.
		\item The Hilbert series of $U_G$ is $(-t)^n \chi_G(-t^{-1})$, where $\chi_G$ is the chromatic polynomial of $G$.
	\end{enumerate}
\end{thm}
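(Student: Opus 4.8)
The plan is to establish the three parts in order, treating the no-broken-circuit basis in (b) as the technical heart; parts (a) and (c) then follow with comparatively little extra work. (This is essentially the content of \cite{OT}, specialized to the graphic arrangement, so the argument below is a sketch of how that specialization goes.)

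For part (a), I would start from the fact that the arrangement attached to $G$ is the graphic arrangement, so the linear dependences among the vectors $e_i - e_j$ (for $\overline{ij} \in G$) are spanned by those coming from circuits of the graphic matroid, i.e.\ from cycles of $G$; for a cycle $i_1 i_2 \cdots i_m i_1$ the (essentially unique) dependence has all coefficients $\pm 1$ and produces exactly the displayed ``long cycle'' relation. It then remains to show that the Orlik-Terao relation attached to an arbitrary dependence $\sum_{p=1}^k c_p(e_{i_p}-e_{j_p}) = 0$ is a consequence of the cycle relations together with $u_{ij}^2 = 0$ and the vanishing of monomials with non-forest support. I would argue by induction on $k$: if the edges $\overline{i_pj_p}$ already contain a repeated edge or a cycle the relation is immediate (every term vanishes), and if they form a forest then $k=0$; in the remaining case some proper subset supports a cycle $C$, and applying the cycle relation for $C$ to rewrite the products $\prod_{q\neq p} u_{i_qj_q}$ with $\overline{i_pj_p}\in C$ reduces to dependences supported on strictly fewer edges. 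I expect this step to be routine once the sign bookkeeping (controlled by the antisymmetry $u_{ij}=-u_{ji}$) is fixed.

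For part (b), the two halves are spanning and linear independence. For spanning, fix the total order on edges and show that any monomial $\prod_{\overline{ij}\in S} u_{ij}$ whose support contains a broken circuit can be rewritten as a $\QQ$-combination of monomials strictly smaller in a suitable term order: if $B = C \setminus \{e_0\}$ is a broken circuit with $e_0 = \min C$ and $B \subseteq S$, the cycle relation for $C$ expresses $\prod_{e\in B} u_e$ in terms of $\prod_{e \in C\setminus\{f\}} u_e$ over $f \in C \setminus \{e_0\}$, each of which trades the edge $f$ for the smaller edge $e_0$; multiplying by $\prod_{e\in S\setminus B} u_e$ and discarding any monomial with a repeated edge or cyclic support strictly decreases the term order, so the process terminates in a combination of NBC monomials. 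For linear independence I would compare $U_G$ with the Orlik-Solomon algebra $A(G)$: $U_G$ carries a filtration whose associated graded ring surjects onto $A(G)$ via $u_{ij}\mapsto$ (the exterior generator), and $A(G)$ is classically known to have the NBC monomials as a basis, with $\dim A(G)_k$ equal to the number of size-$k$ NBC subsets of $G$. Since the NBC monomials span $U_G$ and their images already span a space of that dimension, they must be independent. (Alternatively one can run a deletion--contraction argument: for a non-loop edge $e$ there is a short exact sequence $0 \to u_e\, U_{G/e} \to U_G \to U_{G\setminus e} \to 0$, up to the appropriate degree shift, mirroring $\chi_G(t) = \chi_{G\setminus e}(t) - \chi_{G/e}(t)$, and one inducts on the number of edges.)

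For part (c), I would combine (b) with Whitney's broken-circuit theorem: the chromatic polynomial satisfies $\chi_G(t) = \sum_{k\ge 0} (-1)^k a_k\, t^{\,n-k}$, where $a_k$ is the number of $k$-element edge subsets of $G$ containing no broken circuit. By (b), $a_k = \dim (U_G)_k$, so the Hilbert series of $U_G$ is $\sum_k a_k t^k$, and a one-line substitution gives $(-t)^n \chi_G(-t^{-1}) = (-t)^n \sum_k (-1)^k a_k (-t^{-1})^{n-k} = \sum_k a_k t^k$, as claimed. The main obstacle is the linear-independence half of (b): spanning is an explicit straightening algorithm, but showing the NBC monomials are genuinely independent requires real input — either verifying that the filtration on $U_G$ degenerates onto $A(G)$, or checking that the deletion--contraction sequence is well defined modulo all the defining relations and exact — and this is the one place where \cite{OT} does substantive work. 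The remaining ingredients (identifying circuits of the graphic matroid in (a), the term-order reduction in (b), and the chromatic-polynomial identity in (c)) should all be routine.
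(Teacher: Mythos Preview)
The paper does not give its own proof of this theorem: it is quoted from \cite{OT} and used as a black box throughout Section~2. There is therefore no proof in the paper to compare your proposal against. Your outline is a faithful sketch of how the Orlik--Terao argument specializes to the graphic case, and you correctly isolate the one genuinely nontrivial step, namely linear independence of the NBC monomials in part~(b), and you name the two standard routes (comparison with the Orlik--Solomon algebra, or deletion--contraction). Parts~(b) (spanning) and~(c) are handled exactly as one would expect.

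One small correction to your treatment of (a): the claim that ``if the edges already contain a repeated edge or a cycle the relation is immediate (every term vanishes)'' is not correct as stated. If the support $S$ of the dependence contains a cycle $C$ and $p\in C$, the term $c_p\prod_{q\neq p} u_{i_qj_q}$ has support $S\setminus\{e_p\}$, which may be a forest and need not vanish. The clean way to organize the reduction is: first show that any monomial with cyclic support vanishes by multiplying the cycle relation for $C$ by some $u_e$ with $e\in C$ and using $u_e^2=0$; then note that if $S$ has cyclomatic number at least $2$, every $S\setminus\{e_p\}$ still contains a cycle and all terms vanish, while if $S$ has a unique cycle $C$, any dependence supported in $S$ is actually supported on $C$ (the forest edges are independent), so the Orlik--Terao relation is a monomial multiple of the cycle relation for $C$. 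With that adjustment your outline for (a) is fine.
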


We call the basis in part (b) a \emph{no-broken-circuit (NBC) basis}. The Orlik-Terao algebra is known to be isomorphic as graded vector space to related algebras such as the Orlik-Solomon algebra or the Gelfand-Varchenko algebra \cite{OrlikSolomon, GelfandVarchenko}.

We will also need the following simple consequence of Theorem~\ref{thm-ot}.
\begin{prop} \label{prop-otsub}
	For any subgraph $H \subset G$, $U_H$ is a subalgebra of $U_G$.
\end{prop}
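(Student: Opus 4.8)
The plan is to construct the inclusion map explicitly and verify injectivity using the NBC bases of Theorem~\ref{thm-ot}. Since the edges of $H$ form a subset of the edges of $G$, every cycle of $H$ is a cycle of $G$, and every linear dependence among the vectors $e_i-e_j$ for $\overline{ij}\in H$ is also one among the corresponding vectors for $G$; hence every defining relation of $U_H$ holds in $U_G$ (this is immediate from the presentation of $U_G$, or from Theorem~\ref{thm-ot}(a)). Therefore $u_{ij}\mapsto u_{ij}$ extends to a well-defined $\QQ$-algebra homomorphism $\phi\colon U_H\to U_G$, whose image is the subalgebra of $U_G$ generated by $\{u_{ij}:\overline{ij}\in H\}$. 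It then suffices to show that $\phi$ is injective, for then $U_H\cong\phi(U_H)$ is a subalgebra of $U_G$.

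To prove injectivity, I would choose the total order on the edges of $G$ used to define broken circuits so that every edge of $H$ precedes every edge of $G$ not in $H$, and order the edges of $H$ by the induced order. The key claim is that, for a set $S$ of edges of $H$, $S$ is an NBC set of $H$ if and only if it is an NBC set of $G$. One direction is easy: if a cycle $C$ is contained in $H$, then it is a cycle of both graphs and its minimum edge is the same in either order, so any $H$-broken-circuit contained in $S$ is also a $G$-broken-circuit contained in $S$. For the other direction, suppose $B\subseteq S$ is a $G$-broken-circuit, say $B=C\setminus\{e\}$ with $C$ a cycle of $G$ and $e$ its $G$-minimum edge. Since $|C|\geq 3$ and $B$ is a nonempty set of edges of $H$, the cycle $C$ contains an edge of $H$; if $C$ also contained an edge not in $H$, that edge would have to be $e$ (the only edge of $C$ outside $B$), but then $e$ would be larger than the $H$-edge of $C$, contradicting minimality of $e$. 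Hence $C$ is contained in $H$, so it is a cycle of $H$ whose $H$-minimum edge is again $e$, and $B$ is an $H$-broken-circuit contained in $S$. This proves the claim.

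Given the claim, $\phi$ carries the NBC monomial $\prod_{\overline{ij}\in S}u_{ij}$ of $U_H$ to the NBC monomial indexed by the same set $S$ in $U_G$, so by Theorem~\ref{thm-ot}(b) it maps a basis of $U_H$ injectively to a linearly independent subset of $U_G$. Thus $\phi$ is injective, and $U_H\cong\phi(U_H)$ is a subalgebra of $U_G$.

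I expect the formal parts---well-definedness of $\phi$ and the passage from injectivity to the subalgebra statement---to be routine, and the only point requiring care to be the broken-circuit comparison, in particular the ordering trick: for an arbitrary total order a $G$-broken-circuit may lie entirely inside the edge set of $H$ without coming from any cycle of $H$, so one genuinely needs the edges of $H$ to come first.
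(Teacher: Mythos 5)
Your proof follows the paper's approach exactly: define the natural homomorphism $\phi\colon U_H\to U_G$ by checking the relations carry over, then show injectivity by matching NBC bases. However, you correctly identify and repair a gap that the paper glosses over: the paper simply asserts that the NBC basis of $U_H$ is a subset of that of $U_G$, which is \emph{false} for an arbitrary edge ordering (e.g., for $G=K_3$, $H$ the path $1\text{--}2\text{--}3$, and the order $\overline{13}<\overline{12}<\overline{23}$, the $H$-NBC set $\{\overline{12},\overline{23}\}$ is a $G$-broken circuit); your choice of ordering with $E(H)$ preceding $E(G)\setminus E(H)$, together with the argument that the deleted minimum edge of any offending $G$-cycle must then lie in $H$, is precisely what makes the inclusion of bases valid. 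This is a genuinely necessary refinement, not merely an elaboration.
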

\begin{proof}
	Any relation in $U_H$ is also a relation in $U_G$, so $U_H$ maps surjectively onto the subalgebra of $U_G$ generated by $u_{ij}$ for $\overline{ij}\in H$. The NBC-basis for $U_H$ is a subset of the NBC-basis for $U_G$, so this map is injective as well.
\end{proof}

\subsection{Commutative quotient}

Let $\FK_G^{ab}$ be the commutative quotient of $\FK_G$. In other words, let $\pi\colon \FK_G \to \FK_G^{ab}$ be the quotient map by the ideal generated by $x_{ij}x_{ik}-x_{ik}x_{ij}$ for $\overline{ij}, \overline{ik} \in G$. The main result of this section is the following theorem.

\begin{thm}\label{thm-main}
	The commutative quotient $\FK_G^{ab}$ and the Orlik-Terao algebra $U_G$ are isomorphic.
\end{thm}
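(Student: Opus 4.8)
The plan is to produce a surjection $\rho_G\colon U_G \twoheadrightarrow \FK_G^{ab}$ and then prove it is injective by exploiting the ambient embedding $\FK_G\subset\FK_n$; the only non-formal ingredients are Theorem~\ref{thm-ot} and Proposition~\ref{prop-cycrel}, both of which are already available.

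First I would build $\rho_G$. By Theorem~\ref{thm-ot}(a), $U_G$ is presented by generators $u_{ij}=-u_{ji}$ modulo $u_{ij}^2=0$ and the $m$-term cyclic relations ranging over cycles of $G$. In $\FK_G^{ab}$ we have $\pi(x_{ij})^2=0$, and Proposition~\ref{prop-cycrel}, being an identity in $\FK_G$, descends to the same cyclic identities among the $\pi(x_{ij})$. Hence $u_{ij}\mapsto\pi(x_{ij})$ gives a well-defined algebra homomorphism $\rho_G\colon U_G\to\FK_G^{ab}$, and it is surjective because the $\pi(x_{ij})$ generate. Taking $G=K_n$ gives in particular a surjection $\rho_{K_n}\colon U_{K_n}\to\FK_n^{ab}$; conversely $U_{K_n}$ is commutative and satisfies $u_{ij}^2=0$ together with the three-term ($m=3$) relations, i.e.\ exactly the abelianized defining relations of the quadratic algebra $\FK_n$, so there is a homomorphism $\FK_n^{ab}\to U_{K_n}$ inverse to $\rho_{K_n}$ on generators. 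Thus $\rho_{K_n}$ is an isomorphism. (Equivalently, one may cite Varchenko's count $\dim\FK_n^{ab}=n!=\dim U_{K_n}$ together with surjectivity of $\rho_{K_n}$.)

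Next I would deduce injectivity of $\rho_G$. The composite $\FK_G\hookrightarrow\FK_n\to\FK_n^{ab}$ of the inclusion with the abelianization map kills every commutator of generators of $\FK_G$, hence factors through a homomorphism $\psi_G\colon\FK_G^{ab}\to\FK_n^{ab}$ taking $\pi(x_{ij})$ to the class of $x_{ij}$ in $\FK_n^{ab}$. Identifying $\FK_n^{ab}$ with $U_{K_n}$ via $\rho_{K_n}^{-1}$ (so that the class of $x_{ij}$ corresponds to $u_{ij}$), the composite $\psi_G\circ\rho_G\colon U_G\to U_{K_n}$ fixes each generator $u_{ij}$ and is therefore the natural map induced by the inclusion of generator sets; by Proposition~\ref{prop-otsub} this map is injective. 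Hence $\rho_G$ is injective, and being also surjective it is the desired isomorphism $U_G\xrightarrow{\ \sim\ }\FK_G^{ab}$.

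The genuinely delicate point is the assertion that $\rho_{K_n}$ is an isomorphism: it says that whatever non-quadratic relations $\FK_n$ may carry, they contribute nothing new after abelianization, which is precisely what the presentation comparison (or Varchenko's dimension count) supplies — once this is fixed, the rest is formal, and it is essential that we know the structure of the ambient $U_{K_n}$ rather than trying to present $\FK_G$ itself. As a complement, and as a bridge to the reduction algorithm of Section~3, one can also check directly that $\FK_G^{ab}$ is spanned by the monomials $\prod_{\overline{ij}\in S}\pi(x_{ij})$ with $S$ a no-broken-circuit set of edges: repeated variables vanish by $x_{ij}^2=0$; a support containing a cycle vanishes because multiplying the corresponding instance of Proposition~\ref{prop-cycrel} by one more cycle edge kills all but one term, leaving the full cycle product equal to $0$; and broken circuits are removed using the cyclic relations exactly as in the proof of Theorem~\ref{thm-ot}(b). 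This re-derives the easy inequality $\dim\FK_G^{ab}\le\dim U_G$, but not the reverse one, which is again why the embedding $\FK_G\subset\FK_n$ is indispensable.
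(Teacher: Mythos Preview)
Your proof is correct and follows essentially the same route as the paper's: both arguments first identify $\FK_n^{ab}\cong U_{K_n}$ by comparing presentations, then use the inclusion $\FK_G\subset\FK_n$ together with Proposition~\ref{prop-otsub} to transport this isomorphism down to $G$, with Proposition~\ref{prop-cycrel} supplying the surjection $U_G\twoheadrightarrow\FK_G^{ab}$. The only cosmetic difference is that you phrase the conclusion as ``$\psi_G\circ\rho_G$ is injective, hence $\rho_G$ is,'' whereas the paper phrases it as ``$\FK_G^{ab}$ surjects onto $U_G$ and $U_G$ surjects onto $\FK_G^{ab}$''; the content is identical.
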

\begin{proof}
	By the defining relations of $\FK_n$, Proposition~\ref{prop-cycrel}, and Theorem~\ref{thm-ot}(a), $\FK_n^{ab} \cong U_{K_n}$. Hence by Proposition~\ref{prop-otsub}, the subalgebra of $\FK_n^{ab}$ generated by $x_{ij}$ for $\overline{ij} \in G$, which is a quotient of $\FK_G^{ab}$, is isomorphic to $U_G$. But $U_G$ is generated by the relations given in Theorem~\ref{thm-ot}(a), and all of these relations also occur in $\FK_G^{ab}$ by Proposition~\ref{prop-cycrel}. Thus $U_G \cong \FK_G^{ab}$.
\end{proof}

Using Theorem~\ref{thm-ot}(c), we immediately get the following corollary conjectured by Kirillov in \cite{Kirillov}.

\begin{cor}\label{cor-main}
The Hilbert series of $\FK_G^{ab}$ is 
\[\Hilb_G^{ab} (t) = (-t)^n \chi_G(-t^{-1}),\]
where $\chi_G(t)$ is the chromatic polynomial of $G$.
\end{cor}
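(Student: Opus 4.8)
The plan is to obtain Corollary~\ref{cor-main} directly from Theorem~\ref{thm-main} together with Theorem~\ref{thm-ot}(c). First I would note that the isomorphism $\FK_G^{ab} \cong U_G$ produced in Theorem~\ref{thm-main} is induced by $x_{ij} \mapsto u_{ij}$ on generators, hence is homogeneous with respect to the degree grading $d$; consequently $\FK_G^{ab}$ and $U_G$ have the same Hilbert series. Then I would simply quote Theorem~\ref{thm-ot}(c), which gives $\Hilb_{U_G}(t) = (-t)^n\chi_G(-t^{-1})$, to conclude that $\Hilb_G^{ab}(t) = (-t)^n\chi_G(-t^{-1})$.

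I expect no real obstacle, since all the substantive work already lives in Theorem~\ref{thm-main} and in the cited results of Orlik and Terao; the only thing to verify is that the isomorphism is graded, which is transparent from its construction. If instead one wanted a more self-contained derivation of the Hilbert series of $U_G$, the natural route would be via the NBC basis of Theorem~\ref{thm-ot}(b): Whitney's broken-circuit theorem identifies the number of NBC subsets of size $k$ with $|b_k|$, where $\chi_G(t) = \sum_k b_k t^{n-k}$, and then $(-t)^n\sum_k b_k(-t^{-1})^{n-k} = \sum_k (-1)^k b_k t^k = \sum_k |b_k| t^k$ because the coefficients of the chromatic polynomial alternate in sign.

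As a sanity check I would specialize to $G = K_n$, where $\chi_{K_n}(t) = t(t-1)\cdots(t-n+1)$; a short computation then gives $(-t)^n\chi_{K_n}(-t^{-1}) = (1+t)(1+2t)\cdots(1+(n-1)t)$, which recovers the dimension $n!$ and Hilbert series of $\FK_n^{ab}$ attributed to Varchenko in the introduction.
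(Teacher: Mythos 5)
Your argument is exactly the paper's: Corollary~\ref{cor-main} is stated as an immediate consequence of Theorem~\ref{thm-main} and Theorem~\ref{thm-ot}(c), and you correctly observe that the isomorphism is graded because it is defined on generators by $x_{ij}\mapsto u_{ij}$. The extra remarks on NBC bases, Whitney's theorem, and the $K_n$ sanity check are correct but not needed for the proof.
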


Note that Theorem~\ref{thm-main} together with Proposition~\ref{prop-simple} implies that for a monomial $P \in \FK_G$, $\pi(P)=0$ unless $P$ is simple. Thus $\FK_G^s$ is, in a sense, a noncommutative analogue of $U_G$. We will study the structure of each $\FK_G^\sigma$ in the next section.

\section{Noncrossing trees}

In this section, we will describe the relations in $\FK_G^\sigma$ using the combinatorics of noncrossing forests. Recall from above that if $\sigma$ has cycle decomposition $\sigma_1\sigma_2 \dots \sigma_k$, then $\FK_G^\sigma \cong \bigotimes_i \FK_{G_i}^{\sigma_i}$, where $G_i$ is the induced subgraph of $G$ on the vertices in $\sigma_i$. Hence it suffices to consider the case when $\sigma$ is an $n$-cycle. For the rest of this section, we will assume without loss of generality that $\sigma = (1\;2\;\cdots \; n)$. We may also assume that all monomials are written in terms of $x_{ij}$ with $\overline{ij} \in G$ and $i<j$.

\subsection{Noncrossing} We begin by defining noncrossing graphs.

\begin{defn}
A graph with vertex set $[n]$ is \emph{noncrossing} if it does not contain edges $\overline{ac}$ and $\overline{bd}$ with $a<b<c<d$.
\end{defn}

We can draw a noncrossing graph as follows: draw the vertices $1$ through $n$ from left to right on a horizontal line. Then the edges of the graph can be drawn as arcs above this line in such a way that no two arcs cross. (See Figure~\ref{fig-noncrossing}.) Note that the definition of noncrossing depends only on the cyclic ordering of $[n]$.

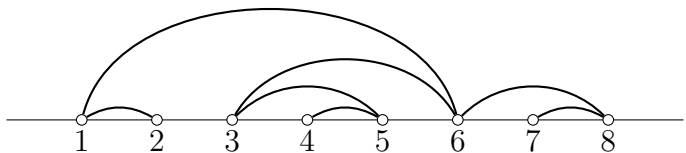
\begin{figure}
	\begin{tikzpicture}
		\draw[thin] (0,0) to (9,0);
		\node[v] (1) at (1,0){};
		\node[v] (2) at (2,0){};
		\node[v] (3) at (3,0){};
		\node[v] (4) at (4,0){};
		\node[v] (5) at (5,0){};
		\node[v] (6) at (6,0){};
		\node[v] (7) at (7,0){};
		\node[v] (8) at (8,0){};
		\draw[thick] (1) to [bend left=30] (2);
		\draw[thick] (1) to [bend left=75] (6);
		\draw[thick] (3) to [bend left=60] (6);
		\draw[thick] (3) to [bend left=45] (5);
		\draw[thick] (4) to [bend left=30] (5);
		\draw[thick] (6) to [bend left=45] (8);
		\draw[thick] (7) to [bend left=30] (8);
		\draw (1) node[below]{1};
		\draw (2) node[below]{2};
		\draw (3) node[below]{3};
		\draw (4) node[below]{4};
		\draw (5) node[below]{5};
		\draw (6) node[below]{6};
		\draw (7) node[below]{7};
		\draw (8) node[below]{8};
	\end{tikzpicture}
	\caption{\label{fig-noncrossing} A noncrossing tree $T$ on $[8]$. See Example~\ref{ex-noncrossing}.}
\end{figure}

\begin{defn}
Let $P$ be a simple monomial, and let $T$ be a noncrossing tree containing $\supp(P)$. We say that $P$ \emph{respects} $T$ if the left-to-right order of the variables containing a fixed index $i$ in $P$ is the same as the clockwise order of the edges incident to $i$ in $T$.
\end{defn}

\begin{ex} \label{ex-noncrossing}
	Let $T$ be the noncrossing tree shown in Figure~\ref{fig-noncrossing}. There are three edges incident to vertex 6, namely $\overline{36}$, $\overline{16}$, and $\overline{68}$ in clockwise order. Hence if $P$ is a monomial with $\supp(P)=T$ that respects $T$, then it must contain $x_{36}$, $x_{16}$, and $x_{68}$ in that order. Similarly, $x_{16}$ must appear before $x_{12}$; $x_{36}$ before $x_{35}$; $x_{45}$ before $x_{35}$; and $x_{78}$ before $x_{68}$. One such monomial is $x_{36}x_{45}x_{35}x_{16}x_{12}x_{78}x_{68}$. Another is $x_{45}x_{78}x_{36}x_{16}x_{35}x_{68}x_{12}$. Observe that these two monomials can be obtained from one another using only commutation relations.
\end{ex}

We first show that we can use noncrossing trees to represent simple monomials.
\begin{prop} \label{prop-tree}
The map that sends a monomial $P\in\FK_G^\sigma$ to $\supp(P)$ is a bijection between monomials in $\FK_G^\sigma$ up to commuting relations and noncrossing trees with vertex set $[n]$. Moreover, for any noncrossing tree $T$, the simple monomials $P$ with $\supp(P)=T$ that respect $T$ are exactly those that lie in $\FK_G^\sigma$.
\end{prop}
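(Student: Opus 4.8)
The plan is to prove both assertions together, by induction on $n$, in each step peeling off the last variable of a monomial. Throughout I use the explicit description of the clockwise order of the edges around a vertex $v$ of a noncrossing tree: first the edges $\overline{uv}$ with $u<v$ in decreasing order of $u$, then the edges $\overline{vw}$ with $w>v$ in decreasing order of $w$. (One checks this matches the drawing convention, and that these local orders are simultaneously realizable --- e.g.\ by a leaf-by-leaf induction --- so that every noncrossing tree is the support of a monomial respecting it.) By Proposition~\ref{prop-simple}, a simple monomial $P$ with $\sigma_P=\sigma=(1\;2\;\cdots\;n)$ has $d(P)=n-1$ and support a spanning tree of $G$. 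The engine of the induction is a single permutation computation: if $P=Q\,x_{ij}$ with $i<j$, then $\sigma_Q=\sigma\,(i\;j)$, and since multiplying an $n$-cycle by a transposition splits it, $\sigma_Q$ is the product of the cyclic permutation of $\{i+1,\dots,j\}$ and the cyclic permutation of $\{j+1,\dots,n,1,\dots,i\}$, two complementary cyclic intervals. When $P$ is simple, Proposition~\ref{prop-simple} then forces $\supp(Q)$ to have exactly two connected components, on these two vertex sets.

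For the implication ``$P$ simple with $\sigma_P=\sigma$ implies $\supp(P)$ is a noncrossing tree that $P$ respects'', write $P=Q\,x_{ij}$ with $i<j$ and apply the inductive hypothesis --- via the isomorphism $\bigotimes_k \FK_{G_k}^{\sigma^{(k)}} \cong \FK_{G'}^{\sigma'}$ --- to the two components of $\supp(Q)$: each is a noncrossing tree on its cyclic interval, respected by the corresponding sub-word of $Q$. Adjoining the chord $\overline{ij}$ between the two arcs creates no crossing, since its endpoints are the extreme points of the two arcs, so $T:=\supp(P)$ is a noncrossing tree. From the component structure, every neighbor of $i$ other than $j$ has label $>j$, and every neighbor of $j$ other than $i$ lies in $\{i+1,\dots,j-1\}$; by the description of clockwise order, $\overline{ij}$ is therefore clockwise-last at both $i$ and $j$. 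Hence appending $x_{ij}$ to $Q$ preserves the property of respecting the tree at $i$ and $j$, and changes nothing at the other vertices, so $P$ respects $T$.

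For the converse, ``$P$ respects a noncrossing tree $T$ implies $\sigma_P=\sigma$'', again take the last variable $x_{ij}$ with $i<j$ and write $P=Q\,x_{ij}$. Since $P$ respects $T$, the edge $\overline{ij}$ is clockwise-last at $i$ and at $j$; by the description of clockwise order this means $j$ is the nearest right-neighbor of $i$, and every neighbor of $j$ lies in $\{i,i+1,\dots,j-1\}$ with $i$ the smallest. Now, using that $T$ is noncrossing, so that no edge crosses the chord $\overline{ij}$, one shows that deleting $\overline{ij}$ breaks $T$ into two noncrossing trees on exactly the cyclic intervals $\{i+1,\dots,j\}$ and $\{j+1,\dots,n,1,\dots,i\}$, each of which $Q$ respects; by induction $\sigma_Q$ is the product of the cyclic permutations of these two intervals, and a direct check then gives $\sigma_Q\,(i\;j)=\sigma$, i.e.\ $\sigma_P=\sigma$.

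Together, these two implications give the second statement and show that $P\mapsto\supp(P)$ is well defined into noncrossing trees; it clearly factors through commuting-equivalence classes. It is injective because two monomials of $\FK_G^\sigma$ with the same support both respect it, hence agree on the relative order of every non-commuting pair of variables and are therefore commutation-equivalent; and it is surjective because for any noncrossing tree $T$ contained in $G$, a leaf-by-leaf construction produces a monomial with support $T$ respecting $T$, which by the converse lies in $\FK_G^\sigma$. I expect the main obstacle to be the combinatorial bookkeeping in the two inductive steps: checking that being clockwise-last at both endpoints of the last edge is exactly the condition needed both to keep the support noncrossing upon adjoining that edge and to force its deletion to split $T$ into the two cyclic intervals dictated by the permutation computation --- which in turn requires pinning the clockwise-order convention down so that it is consistent with both the figure and that computation.
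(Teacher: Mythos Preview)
Your argument is correct and follows essentially the same inductive strategy as the paper: peel off the last variable $x_{ij}$ of $P$, use $\sigma\,(i\;j)=(i{+}1\;\cdots\;j)(j{+}1\;\cdots\;n\;1\;\cdots\;i)$ to split into two cyclic subproblems, and recurse. The only notable difference is that for injectivity the paper gives a short terminal-edge induction while you invoke the (equally valid) trace-monoid fact that two words agreeing on the relative order of every non-commuting pair are commutation-equivalent. One small slip to fix: in the forward step you assert that ``every neighbor of $i$ other than $j$ has label $>j$,'' but $i$ may well have neighbors in $\{1,\dots,i-1\}$ coming from the second component; what the component structure actually gives, and what you need, is that every \emph{right}-neighbor of $i$ other than $j$ has label $>j$, which is precisely the condition making $\overline{ij}$ clockwise-last at~$i$.
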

\begin{proof}
Let $T$ be a noncrossing tree. Call an edge $\overline{ij}$ in $T$ \emph{terminal} if it is the furthest clockwise edge at both $i$ and $j$. Clearly no two terminal edges can share an endpoint. Moreover, every noncrossing tree has a terminal edge: if none of the rightmost edges at each vertex were the same, then there would be a cycle among these $n$ distinct edges. It follows $T$ is respected by some monomial $P$: construct $P$ from right-to-left by removing terminal edges from $T$.

We first show by induction that given a simple monomial $P$ with $\supp(P)=T$, $\sigma_P=\sigma$ if and only if $P$ respects $T$. We may write $P=Qx_{ij}$ with $Q$ simple. If $\sigma_P=\sigma$, then 
\[\sigma_Q = \sigma \cdot (i\;\;j) = (i+1\;\;i+2\;\;\cdots\;\;j)(j+1\;\;j+2\;\;\cdots\;\;n\;\;1\;\;\cdots\;\;i),\]
so we may write $Q=Q_1Q_2$, where $Q_1$ contains those variables of $Q$ with indices $i+1, \dots, j$ (in the same order that they appear in $Q$) and $Q_2$ contains the remaining variables. Here $Q_1$ and $Q_2$ are simple, and $\sigma_{Q_1}$ and $\sigma_{Q_2}$ are the two factors above. By induction, it follows that $Q_1$ and $Q_2$ respect $T$, so $Q$ does also. Since $x_{ij}$ appears last in $P$ and $\overline{ij}$ is terminal in $T$, $P$ also respects $T$. The reverse direction is similar.

To complete the proof, we will show that any two simple monomials with the same support that both respect $T$ are equivalent up to commuting, which we denote by $\sim$. We proceed by induction on degree. Suppose $P=Qx_{ij}$ and $P'=Q'x_{i'j'}$ respect $T$. If $x_{ij}=x_{i'j'}$, then by induction $Q \sim Q'$, so $P \sim P'$. If not, then both $x_{ij}$ and $x_{i'j'}$ are terminal edges, so they are disjoint. Since $x_{i'j'}$ is also a terminal edge in $T \backslash \overline{ij}$, by induction, $Q \sim Rx_{i'j'}$ for some $R$, so $P=Qx_{ij} \sim Rx_{i'j'}x_{ij} \sim Rx_{ij}x_{i'j'} \sim Q'x_{i'j'}= P'$.
\end{proof}

\subsection{Reduction}

In light of Proposition~\ref{prop-tree}, we will use $x_T$ to denote the element of $\FK_G^\sigma$ corresponding to a noncrossing tree $T$. (We extend this to noncrossing forests in the obvious way.) Translating Proposition~\ref{prop-cycrel} into this language, we get the following proposition. See Figure~\ref{fig-cycrel}.

\begin{figure}
\begin{align*}
\vc{
\begin{tikzpicture}[scale=0.7]
\draw[thin] (0,0) to (9,0);
\node[v] (1) at (1,0){};
\node[v] (2) at (2,0){};
\node[v] (3) at (3,0){};
\node[v] (4) at (4,0){};
\node[v] (5) at (5,0){};
\node[v] (6) at (6,0){};
\node[v] (7) at (7,0){};
\node[v] (8) at (8,0){};
\draw[thick] (1) to [bend left=30] (2);
\draw[ultra thick, red] (1) to [bend left=60] (8);
\draw[ultra thick, red] (4) to [bend left=45] (6);
\draw[thick] (3) to [bend left=30] (4);
\draw[thick] (4) to [bend left=30] (5);
\draw[ultra thick, red] (1) to [bend left=45] (4);
\draw[thick] (7) to [bend left=30] (8);
\end{tikzpicture}
}\phantom{{}-{}}
= & \phantom{{}-{}}
\vc{
	\begin{tikzpicture}[scale=0.7]
	\draw[thin] (0,0) to (9,0);
	\node[v] (1) at (1,0){};
	\node[v] (2) at (2,0){};
	\node[v] (3) at (3,0){};
	\node[v] (4) at (4,0){};
	\node[v] (5) at (5,0){};
	\node[v] (6) at (6,0){};
	\node[v] (7) at (7,0){};
	\node[v] (8) at (8,0){};
	\draw[ultra thick, white] (1) to [bend left=60] (8);
	\draw[thick] (1) to [bend left=30] (2);
	\draw[ultra thick, red] (6) to [bend left=60] (8);
	\draw[ultra thick, red] (4) to [bend left=45] (6);
	\draw[thick] (3) to [bend left=30] (4);
	\draw[thick] (4) to [bend left=30] (5);
	\draw[ultra thick, red] (1) to [bend left=45] (4);
	\draw[thick] (7) to [bend left=30] (8);
	\end{tikzpicture}
}\\
& {}-
\vc{
	\begin{tikzpicture}[scale=0.7]
	\draw[thin] (0,0) to (9,0);
	\node[v] (1) at (1,0){};
	\node[v] (2) at (2,0){};
	\node[v] (3) at (3,0){};
	\node[v] (4) at (4,0){};
	\node[v] (5) at (5,0){};
	\node[v] (6) at (6,0){};
	\node[v] (7) at (7,0){};
	\node[v] (8) at (8,0){};
	\draw[thick] (1) to [bend left=30] (2);
	\draw[ultra thick, red] (1) to [bend left=60] (8);
	\draw[ultra thick, red] (4) to [bend left=45] (6);
	\draw[thick] (3) to [bend left=30] (4);
	\draw[thick] (4) to [bend left=30] (5);
	\draw[ultra thick, red] (6) to [bend left=45] (8);
	\draw[thick] (7) to [bend left=30] (8);
	\end{tikzpicture}
}\\
& {}-
\vc{
	\begin{tikzpicture}[scale=0.7]
	\draw[thin] (0,0) to (9,0);
	\node[v] (1) at (1,0){};
	\node[v] (2) at (2,0){};
	\node[v] (3) at (3,0){};
	\node[v] (4) at (4,0){};
	\node[v] (5) at (5,0){};
	\node[v] (6) at (6,0){};
	\node[v] (7) at (7,0){};
	\node[v] (8) at (8,0){};
	\draw[thick] (1) to [bend left=30] (2);
	\draw[ultra thick, red] (1) to [bend left=60] (8);
	\draw[ultra thick, red] (6) to [bend left=45] (8);
	\draw[thick] (3) to [bend left=30] (4);
	\draw[thick] (4) to [bend left=30] (5);
	\draw[ultra thick, red] (1) to [bend left=45] (4);
	\draw[thick] (7) to [bend left=30] (8);
	\end{tikzpicture}
}
\end{align*}
\caption{\label{fig-cycrel}
	A reduction of the form \eqref{eq-rel} for $m=4$. See Proposition~\ref{prop-reduce}.
	}
\end{figure}
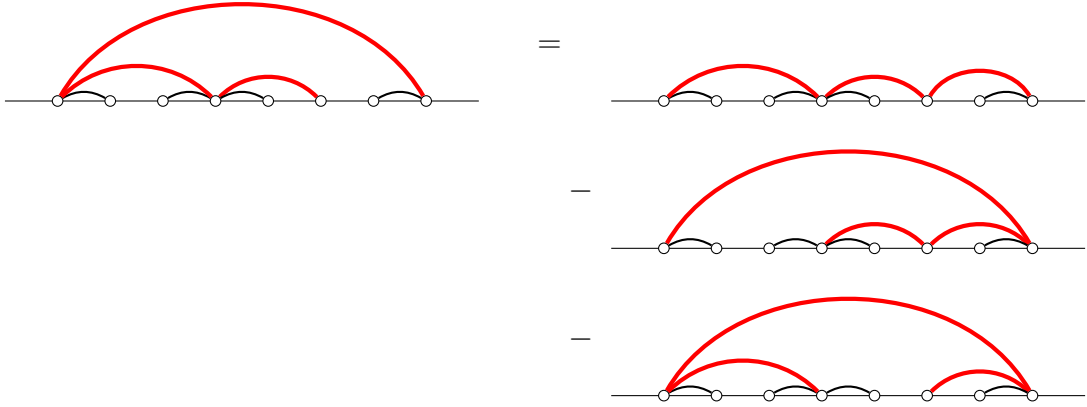

\begin{prop}\label{prop-reduce}
Let $1 \leq i_1<i_2<\dots<i_m \leq n$, and let $H$ be a connected noncrossing graph containing the unique cycle $C=(i_1, i_2, \dots, i_m)$. Let $T_1 = H \backslash \overline{i_1i_m}$ and $T_j = H \backslash \overline{i_{j-1}i_{j}}$ for $1 < j \leq m$. Then
\begin{equation*}\label{eq-rel}
x_{T_m} = x_{T_1}-x_{T_2}-x_{T_3}-\cdots - x_{T_{m-1}}. \tag{$*$} 
\end{equation*}
\end{prop}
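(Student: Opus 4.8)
The plan is to translate the statement directly into a computation in $\FK_G^\sigma$ and apply Proposition~\ref{prop-cycrel} to the cycle $C$, being careful about the order in which the variables are written so that each term genuinely equals the $x_{T_j}$ claimed. First I would fix a simple monomial representative of $x_{T_m}$ that respects $T_m$: by Proposition~\ref{prop-tree} such a representative exists and is unique up to commutation. Since $\overline{i_{m-1}i_m}$ is an edge of $T_m$, I would like to arrange the representative so that the variables $x_{i_1i_2}, x_{i_2i_3}, \dots, x_{i_{m-1}i_m}$ coming from the path $i_1 - i_2 - \cdots - i_m$ appear consecutively (in that order) as a contiguous block, with the edges of $H$ outside this path forming the rest of the monomial. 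The key observation is that because $H$ is noncrossing and $C$ is its unique cycle, removing the path edge $\overline{i_{j-1}i_j}$ (or $\overline{i_1i_m}$) from $H$ yields the noncrossing tree $T_j$, and the "respects $T_j$" condition differs from "respects $T_m$" only in how the variables through the vertices $i_1, \dots, i_m$ on the cycle are ordered — which is precisely what the cyclic rotation in Proposition~\ref{prop-cycrel} controls.

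The second step is the algebraic heart: write the representative of $x_{T_m}$ as $L \cdot (x_{i_1i_2}x_{i_2i_3}\cdots x_{i_{m-1}i_m}) \cdot R$, where $L$ and $R$ collect the variables of $H \setminus C$ that must appear (respectively) before and after the path block in order to respect $T_m$. Applying Proposition~\ref{prop-cycrel} to $x_{i_1i_2}x_{i_2i_3}\cdots x_{i_{m-1}i_m}$ rewrites this block as $-\sum_{j=1}^{m-1}$ of the cyclically rotated products; substituting gives $x_{T_m} = -\sum_{j} L \cdot (\text{rotated block}) \cdot R$. I would then identify, for each $j$, the monomial $L \cdot (\text{$j$-th rotated block}) \cdot R$ with a monomial whose support is exactly $T_j$: the support is clear since the rotated block uses exactly the $m-1$ edges of $C$ other than one path edge, and I must check that the resulting monomial respects $T_j$, so that it equals $\pm x_{T_j}$ in $\FK_G^\sigma$. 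Here a small sign bookkeeping is needed: the cyclic rotation in Proposition~\ref{prop-cycrel} may reverse the direction of some edges (e.g. $x_{i_mi_1} = -x_{i_1i_m}$), and the term corresponding to removing $\overline{i_1i_m}$ versus removing an interior edge $\overline{i_{j-1}i_j}$ picks up different signs — I expect the net effect to be exactly the single $+x_{T_1}$ against $m-2$ terms $-x_{T_j}$ for $2 \le j \le m-1$, matching the asserted formula after moving $x_{T_m}$ to the left.

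The main obstacle I anticipate is verifying the "respects $T_j$" condition for each rotated term, i.e.\ that the clockwise order of edges at each vertex of $T_j$ agrees with the left-to-right order in $L \cdot (\text{rotated block}) \cdot R$. At vertices not on the cycle $C$, nothing changes and the condition is inherited from $x_{T_m}$. At a cycle vertex $i_k$, exactly two cycle edges are incident (or one, at the two endpoints of the removed edge), and I need to check that rotating the path product to start at $i_k$ places those two variables in the correct clockwise position relative to the non-cycle edges at $i_k$; this is where the noncrossing hypothesis on $H$ is essential, since it forces the non-cycle edges at $i_k$ to lie entirely "inside" or entirely "outside" the arc of $C$ at $i_k$, pinning down the clockwise order. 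I would handle this by a direct local analysis at each $i_k$, possibly appealing to the same merge/split argument used in the proof of Proposition~\ref{prop-tree} to reduce to checking $\S_n$-degree, which automatically guarantees the monomial lies in $\FK_G^\sigma$ and hence (by the second half of Proposition~\ref{prop-tree}) respects its support tree. Once that identification is in place, the proposition follows by assembling the signs.
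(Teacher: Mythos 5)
Your overall strategy is essentially the same as the paper's: write each $x_{T_j}$ in the form $x_L \cdot (\text{cyclically rotated path block}) \cdot x_R$ with the same $L$ and $R$ for every $j$, apply Proposition~\ref{prop-cycrel} to the rotated blocks, and track the single sign that arises from $x_{i_m i_1} = -x_{i_1 i_m}$. Your idea of deducing the ``respects $T_j$'' condition by noting that the cycle relation is $\S_n$-homogeneous (so each term automatically lands in $\FK_G^\sigma$ and hence, by the second half of Proposition~\ref{prop-tree}, respects its support) is a clean way to handle the verification that the paper says ``follows easily.''

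There is, however, a concrete slip in the opening step. You write ``Since $\overline{i_{m-1}i_m}$ is an edge of $T_m$'' and then choose a representative of $x_{T_m}$ built around the block $x_{i_1 i_2}x_{i_2 i_3}\cdots x_{i_{m-1}i_m}$. But by definition $T_m = H \setminus \overline{i_{m-1}i_m}$, so $\overline{i_{m-1}i_m}$ is precisely the edge \emph{removed} to form $T_m$, not an edge of it. The monomial $x_L \cdot x_{i_1 i_2}\cdots x_{i_{m-1}i_m} \cdot x_R$ has support $H \setminus \overline{i_1 i_m} = T_1$, so it represents $x_{T_1}$, not $x_{T_m}$. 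The correct identification (which is what the paper asserts) is $x_{T_j} = x_L\cdot x_{i_j i_{j+1}}\cdots x_{i_{m-1}i_m}\,x_{i_1 i_m}\,x_{i_1 i_2}\cdots x_{i_{j-2}i_{j-1}}\cdot x_R$ for each $j$; substituting $x_{i_m i_1} = -x_{i_1 i_m}$ into Proposition~\ref{prop-cycrel} and multiplying by $x_L$, $x_R$ then gives $x_{T_1} - x_{T_2} - \cdots - x_{T_m} = 0$, which is \eqref{eq-rel}. Your remark about ``moving $x_{T_m}$ to the left'' at the end suggests you sense that $x_{T_m}$ should arise as one of the rotated terms rather than the starting point, but as written the initial identification is wrong and needs to be corrected before the sign bookkeeping comes out as you describe.
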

\begin{proof}
We claim that $x_{T_j} = x_L\cdot x_{j,j+1}\cdots x_{m-1, m}x_{1m}x_{12}\cdots x_{j-2,j-1}\cdot x_R$ for some subgraphs $L$ and $R$ such that $L$, $R$, and $C$ partition the edges of $H$. Indeed, $L$ is the forest consisting of those edges attached to $C$ by an edge to the left of (that is, counterclockwise from) an edge in $C$, and likewise $R$ is similarly defined to the right. The result now follows easily from Proposition~\ref{prop-cycrel}.
\end{proof}

We can use the equality in Proposition~\ref{prop-reduce} to reduce any element of $\FK_G^\sigma$ by replacing the left side with the right side. The monomials that will result will be precisely those that cannot be reduced any further.

\begin{defn}
A noncrossing tree $T$ with vertex set $[n]$ is \emph{$G$-reduced} if $T \subset G$ and there do not exist $1 \leq i_1 < i_2 < \dots < i_m \leq n$ such that $\overline{i_1i_m} \in T$;  $\overline{i_{j-1}i_j} \in T$ for all $1 < j < m$; $\overline{i_{m-1}i_m} \in G$; and $T \cup \overline{i_{m-1}i_m}$ is noncrossing.
\end{defn}

In other words, a noncrossing tree $T$ is $G$-reduced if it cannot appear on the left side of a relation of the form \eqref{eq-rel} in $\FK_G^\sigma$.
\begin{lemma} \label{lemma-spanning}
The set $\{x_T\}$, where $T$ ranges over all $G$-reduced noncrossing trees, spans $\FK_G^\sigma$.
\end{lemma}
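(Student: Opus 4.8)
The plan is to argue by a descent/termination argument on the reduction process defined by Proposition~\ref{prop-reduce}. I want to show that, starting from any monomial $x_T \in \FK_G^\sigma$ corresponding to a noncrossing tree $T \subset G$, repeatedly applying relations of the form \eqref{eq-rel} eventually produces an expression for $x_T$ as a linear combination of $x_{T'}$ with $T'$ a $G$-reduced noncrossing tree. Since every element of $\FK_G^\sigma$ is by Proposition~\ref{prop-tree} a linear combination of such $x_T$, this will establish that the $G$-reduced noncrossing trees span $\FK_G^\sigma$.

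First I would observe that every tree appearing on the right side of \eqref{eq-rel} is again a noncrossing tree contained in $G$: each $T_j = H \setminus \overline{i_{j-1}i_j}$ (or $H \setminus \overline{i_1 i_m}$) is obtained from the connected noncrossing graph $H \subset G$ (note $H = T_m \cup \overline{i_{m-1}i_m} \subset G$ since $T_m$ is a tree in $G$ and the extra edge is required to lie in $G$) by deleting one edge of its unique cycle, hence is a spanning tree of $H$, still noncrossing, still inside $G$. So the reduction stays within the spanning set $\{x_T : T \subset G \text{ noncrossing tree}\}$, and a tree is a fixed point of this process precisely when it is $G$-reduced. It then remains to exhibit a monovariant: a function $\nu$ from noncrossing trees to a well-ordered set such that every $T_j$ on the right side of a relation \eqref{eq-rel} with left side $T_m$ satisfies $\nu(T_j) < \nu(T_m)$.

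The main obstacle is finding the right statistic $\nu$. The natural candidate is something like the sum over edges $\overline{ij}$ of $T$ (with $i<j$) of the ``span'' $j - i$, or more precisely a statistic counting how spread-out the arcs are: when we delete $\overline{i_{j-1}i_j}$ from $H$ and keep the long chord $\overline{i_1 i_m}$ replaced appropriately, we are trading one long arc for shorter arcs plus rearranging attached forests $L$ and $R$. Looking at the relation in the form derived in the proof of Proposition~\ref{prop-reduce}, $x_{T_m}$ contains the product $x_{i_{m-1},i_m}x_{1m}\cdots$ which includes the long chord $\overline{i_1 i_m}$ and the chord $\overline{i_{m-1}i_m}$, whereas each $T_j$ replaces the cycle edge $\overline{i_{j-1}i_j}$ — so the chords present in $T_j$ are a strict subset-like modification. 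I expect that the total edge-span $\sum_{\overline{ij} \in T}(j-i)$, or possibly a lexicographically refined version ordered by largest span first, strictly decreases: deleting $\overline{i_{j-1}i_j}$ and re-attaching the forests keeps the forest edges' spans unchanged while the cycle's edge spans satisfy $(i_m - i_1) > (i_j - i_{j-1})$ for each $j$, but one must check the re-attachment of $L$ and $R$ does not lengthen any arc — which it does not, since $L$ and $R$ stay attached to the same cycle and the cyclic structure is only shortened.

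Concretely, I would: (1) state and prove the ``closure'' lemma that the right-hand-side trees are noncrossing trees in $G$; (2) define $\nu(T) = \sum_{\overline{ij}\in T, i<j}(j-i)$ and verify using the explicit description of $T_1, \dots, T_{m-1}$ in terms of $H$ (delete a short cycle edge instead of the long one, keeping $L, R$ fixed) that $\nu(T_j) < \nu(T_m)$ for $1 \le j \le m-1$ in any instance of \eqref{eq-rel}; (3) conclude by Noetherian induction on $\nu$ that every $x_T$ rewrites as a $\QQ$-combination of $x_{T'}$ with $T'$ $G$-reduced, hence these span. The one subtlety to handle carefully in step~(2) is confirming that the partition of $H$'s edges into $L$, $C$, $R$ is genuinely preserved under each deletion so that only the cycle's contribution to $\nu$ changes; this follows from the description in the proof of Proposition~\ref{prop-reduce} since $L$ and $R$ are determined by the attachment data to $C$, which is the same for all $T_j$ coming from the same $H$.
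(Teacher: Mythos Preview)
Your overall plan---show that the reduction \eqref{eq-rel} terminates via a monovariant, then conclude by Noetherian induction---is exactly the paper's strategy, and your closure observation (each $T_j$ is again a noncrossing tree in $G$) is correct. The gap is in step~(2): the total-span statistic $\nu(T)=\sum_{\overline{ij}\in T}(j-i)$ does \emph{not} strictly decrease. Passing from $T_m=H\setminus\overline{i_{m-1}i_m}$ to $T_j=H\setminus\overline{i_{j-1}i_j}$ (for $1<j<m$), you add the edge $\overline{i_{m-1}i_m}$ and remove $\overline{i_{j-1}i_j}$, so $\nu(T_j)-\nu(T_m)=(i_m-i_{m-1})-(i_j-i_{j-1})$, which need not be negative. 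For instance, with $m=4$ and $(i_1,i_2,i_3,i_4)=(1,2,5,6)$, the cycle contributions to $\nu$ are $9$ for both $T_4$ (edges $\overline{12},\overline{25},\overline{16}$) and $T_2$ (edges $\overline{25},\overline{56},\overline{16}$). Your fallback ``lexicographic by largest span'' also fails here: the span multisets are both $\{5,3,1\}$. Part of the confusion is that $T_m$ is obtained by deleting the edge $\overline{i_{m-1}i_m}$, not the long chord $\overline{i_1i_m}$; the long chord is present in $T_m$ and in every $T_j$ with $j>1$.

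The paper fixes this by ordering \emph{edges} lexicographically (compare left endpoints first, then right) and comparing trees by their edge sets. The key point is that $\overline{i_{m-1}i_m}$ is the lexicographically largest edge of the cycle $C$, since its left endpoint $i_{m-1}$ is strictly larger than the left endpoint of every other cycle edge (including $\overline{i_1i_m}$). Hence each $T_j$ with $j<m$ is obtained from $T_m$ by swapping out a cycle edge for a strictly larger one, so the edge set strictly increases in the induced order; as there are only finitely many noncrossing spanning trees, the process terminates. Your argument becomes correct if you replace $\nu$ with this edge-lexicographic comparison.
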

\begin{proof}
It suffices to show that starting from an element of $\FK_G^\sigma$ and repeatedly replacing the left side of \eqref{eq-rel} with right side, this process will eventually terminate. Order the edges $\overline{ij}$ of $G$ (with $i<j$) lexicographically. Any tree appearing on the right side of \eqref{eq-rel} is obtained from the tree on the left side by replacing an edge in the cycle $C=(i_1, i_2, \dots, i_m)$ with the edge $\overline{i_{m-1}i_m}$. But this edge is the largest edge of $C$, so applying \eqref{eq-rel} strictly increases the set of edges lexicographically. Therefore the reduction must terminate.
\end{proof}
In fact, we will show that the $G$-reduced noncrossing trees give a basis for $\FK_G^\sigma$. This is equivalent to showing that the order in which we perform reductions of the form \eqref{eq-rel} does not matter, so that one will always at the same answer regardless of choices made during the reduction.

To prove this, we first show it in the case when $G=K_n$. In this case, the $K_n$-reduced noncrossing trees are those for which, at any vertex $i$, there is at most one edge $\overline{ij}$ with $j>i$. The number of such trees is the Catalan number $C_{n-1} = \frac1n\binom{2n-2}{n-1}$.

\begin{rmk}
	One can also prove Lemma~\ref{lemma-knbasis} below using Theorem~\ref{thm-main} by showing that the images of $K_n$-reduced noncrossing trees in $\FK_n^{ab} \cong U_{K_n}$ lie in an NBC-basis under an appropriate ordering on the edges. However, such an argument cannot be applied directly for most $G$ as the missing edges cause $G$-reduced trees and broken circuits not to have as simple a description in general.
\end{rmk}

\begin{rmk}
	In \cite{Meszaros}, a similar reduction on noncrossing trees was given based on subdivisions of root polytopes. While that reduction algorithm can also be used to give a basis of $\FK_n^{s}$, albeit with a different term order, it usually fails to give unique reductions for $\FK_G^s$ when $G$ is not the complete graph.
\end{rmk}

\begin{lemma} \label{lemma-knbasis}
The set $\{x_T\}$, where $T$ ranges over all $K_n$-reduced noncrossing trees, is a basis of $\FK_n^\sigma$.
\end{lemma}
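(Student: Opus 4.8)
The plan is to combine Lemma~\ref{lemma-spanning}, which already shows that $\{x_T\}$ spans $\FK_n^\sigma$ as $T$ ranges over $K_n$-reduced noncrossing trees, with a linear independence argument carried out in the commutative quotient. By Theorem~\ref{thm-main} it suffices to produce a total order on the edges of $K_n$ under which every $K_n$-reduced noncrossing tree is a no-broken-circuit set: then the images $\pi(x_T) \in \FK_n^{ab} \cong U_{K_n}$ are, up to sign, distinct elements of the NBC basis of Theorem~\ref{thm-ot}(b), hence linearly independent, which forces the $x_T$ themselves to be linearly independent in $\FK_n^\sigma$.

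The order I would use is the reverse-lexicographic one: declare $\overline{ij} \prec \overline{kl}$ (with $i<j$, $k<l$) if $j > l$, or if $j = l$ and $i > k$. The reason for reversing is that the reduction \eqref{eq-rel} eliminates the \emph{largest} edge $\overline{i_{m-1}i_m}$ of a cycle, whereas a broken circuit is obtained by deleting the \emph{smallest} edge, so we want an order in which the edge joining the two largest vertices of a cycle of the special shape appearing in \eqref{eq-rel} is the minimum edge. Concretely I would check: if $1 \le i_1 < i_2 < \dots < i_m \le n$ and $C$ is the cycle on these vertices with edge set $\{\overline{i_1i_2}, \overline{i_2i_3}, \dots, \overline{i_{m-1}i_m}, \overline{i_1i_m}\}$, then the $\prec$-minimum edge of $C$ is $\overline{i_{m-1}i_m}$ --- of the edges of $C$ the two incident to the maximal vertex $i_m$ are $\overline{i_{m-1}i_m}$ and $\overline{i_1i_m}$, and of these the former is $\prec$-smaller, winning the tie by having the larger small endpoint. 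Now suppose $T$ is a noncrossing tree that is \emph{not} $K_n$-reduced, witnessed by indices $i_1 < \dots < i_m$ as in the definition; then $\overline{i_1i_2}, \dots, \overline{i_{m-2}i_{m-1}}, \overline{i_1i_m}$ all lie in $T$, and these are exactly the edges of $C \setminus \overline{i_{m-1}i_m}$, a broken circuit. Hence $T$ contains a broken circuit and is not an NBC set, so contrapositively every $K_n$-reduced noncrossing tree is an NBC set.

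Granting this, the conclusion is immediate: for a $K_n$-reduced noncrossing tree $T$, Theorem~\ref{thm-main} identifies $\pi(x_T)$ with $\pm\prod_{\overline{ij}\in T,\ i<j} u_{ij}$, which by the previous paragraph and Theorem~\ref{thm-ot}(b) is (up to sign) an NBC basis monomial of $U_{K_n}$; distinct noncrossing trees have distinct edge sets and hence yield distinct such monomials, so the $\pi(x_T)$ are linearly independent in $U_{K_n}$, and therefore so are the $x_T$ in $\FK_n^\sigma$. Together with Lemma~\ref{lemma-spanning} this gives the basis. I expect essentially all the work to be in the middle step: choosing the edge order correctly (the one subtle point is that it must be the \emph{reverse} of the naive lexicographic order) and checking that the configuration forbidden by $K_n$-reducedness is exactly a broken circuit for that order; the remaining points --- for instance that $C$ really is the unique cycle of $T \cup \overline{i_{m-1}i_m}$, which is automatic because $T$ is a tree --- are routine.
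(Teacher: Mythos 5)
Your overall strategy---push forward to $\FK_n^{ab}\cong U_{K_n}$ via Theorem~\ref{thm-main} and hit the $K_n$-reduced trees with a well-chosen NBC basis---is exactly the alternative route that the paper itself flags in the remark immediately preceding this lemma, and it is a genuinely different proof from the one the paper gives (the paper instead verifies local confluence of the $m=3$ reductions by an explicit overlap computation and invokes the Diamond Lemma). The paper's route has the advantage that it is what actually generalizes to arbitrary $G$ in Theorem~\ref{thm-main2}, whereas the NBC route is special to $K_n$, but for $K_n$ your plan is sound. The spanning part and the reduction from linear independence in $\FK_n^\sigma$ to linear independence in $U_{K_n}$ are fine, and your choice of edge order and the calculation that the $\prec$-minimum edge of the cycle $i_1<\cdots<i_m$ is $\overline{i_{m-1}i_m}$ are correct.

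However, the central step has a real logical gap: you prove the wrong implication. Starting from ``$T$ is not $K_n$-reduced,'' you exhibit a broken circuit inside $T$, i.e.\ you prove ``not $K_n$-reduced $\Rightarrow$ not NBC.'' The contrapositive of this is ``NBC $\Rightarrow$ $K_n$-reduced,'' but what you need in order to conclude that the $\pi(x_T)$ land on distinct NBC basis monomials is the \emph{converse}, namely ``$K_n$-reduced $\Rightarrow$ NBC.'' Your closing sentence labels the converse as the contrapositive, which is where the argument breaks. To actually prove ``$K_n$-reduced $\Rightarrow$ NBC'' you must start from a broken circuit of an \emph{arbitrary} cycle $C$ of $K_n$ (not only the convex cycles $i_1<\cdots<i_m$ appearing in the definition of $K_n$-reduced) lying inside a noncrossing tree $T$, and deduce that $T$ violates the equivalent characterization of $K_n$-reducedness (some vertex has two edges to larger vertices). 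This is true and not hard---for instance, if $b=\max(C)$ and $e=\overline{ab}$ is the $\prec$-minimum edge of $C$, then the path $C\setminus e\subseteq T$ runs from $a$ to $b$ and contains the other neighbor $a'<a$ of $b$; the minimum vertex $w$ on that path is therefore neither $a$ nor $b$, so its two path-neighbors are both larger than $w$, exhibiting the required violation---but it is a different argument from the one you wrote, and your proof as stated does not establish the direction you need.
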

\begin{proof}
Note that by the definition of $\FK_n$ (and the fact that simple monomials have no repeated variables), all of the relations between $x_T$ in $\FK_n^\sigma$ can be derived from the three-term quadratic relation $x_{ik}x_{ij} = x_{ij}x_{jk} - x_{jk}x_{ik}$ for $i<j<k$; in other words, they are given by relations of the form \eqref{eq-rel} when $m=3$.

Consider the reduction procedure that reduces an element of $\FK_n$ (written as a linear combination of $x_T$) by repeatedly replacing the left side of \eqref{eq-rel} for $m=3$ with the right side. We claim that the result is independent of the specific sequence of reductions. This will follow from the Diamond Lemma if we can show that the reduction is locally confluent, that is, if an element $a$ can be reduced to either $b$ or $c$ by one application of \eqref{eq-rel}, then it is possible to reduce both $b$ and $c$ to the same expression $d$ (possibly with multiple applications of \eqref{eq-rel}).

Suppose $x_T$ can be reduced in two different ways, so that it contains edges $\overline{ik}$, $\overline{ij}$, $\overline{i'k'}$, and $\overline{i'j'}$ with $i<j<k$ and $i'<j'<k'$. If all of these edges are distinct, then the two reductions can be applied in either order with the same result.

If instead the edges are not disjoint, let us assume without loss of generality that $i=i'=1$, $j'=2$, $j=k'=3$, and $k=4$. Then we have the following two reductions of $x_{14}x_{13}x_{12}$:
\begin{align}
x_{14}x_{13}x_{12} &= x_{13}x_{34}x_{12} - x_{34}x_{14}x_{12},\\
x_{14}x_{13}x_{12} &= x_{14}x_{12}x_{23} - x_{14}x_{23}x_{13}.
\end{align}
We can further reduce (1) as follows:
\begin{align*}
x_{13}x_{12}x_{34}-x_{34}x_{14}x_{12} &= (x_{12}x_{23}- x_{23}x_{13})x_{34}-x_{34}(x_{12}x_{24}-x_{24}x_{14})\\
&= x_{12}x_{23}x_{34} - x_{23}x_{13}x_{34} - x_{34}x_{12}x_{24} + x_{34}x_{24}x_{14}.
\end{align*}
We can further reduce (2) to the same expression as follows:
\begin{align*}
x_{14}x_{12}x_{23} - x_{23}x_{14}x_{13} &= (x_{12}x_{24}- x_{24}x_{14})x_{23} - x_{23}(x_{13}x_{34} - x_{34}x_{14})\\
&= x_{12}x_{24}x_{23} -x_{24}x_{23}x_{14}- x_{23}x_{13}x_{34}+x_{23}x_{34}x_{14}\\
&= x_{12}(x_{23}x_{34}-x_{34}x_{24})-(x_{23}x_{34}-x_{34}x_{24})x_{14}-x_{23}x_{13}x_{34}+x_{23}x_{34}x_{14}\\
&= x_{12}x_{23}x_{34}-x_{12}x_{34}x_{24}+x_{34}x_{24}x_{14} - x_{23}x_{13}x_{34}.\end{align*}
See Figure~\ref{fig-proof} for a graphical depiction of this calculation.

It follows that the reduction procedure always yields a unique result. Thus the set $\{x_T\}$, where $T$ ranges over all $K_n$-reduced noncrossing trees, is linearly independent, so by Lemma~\ref{lemma-spanning} it is a basis of $\FK_n^\sigma$.
\end{proof}

\begin{figure}
	\begin{align*}
	\vc{
		\begin{tikzpicture}[scale=0.7]
		\draw[thin] (0,0) to (5,0);
		\node[v] (1) at (1,0){};
		\node[v] (2) at (2,0){};
		\node[v] (3) at (3,0){};
		\node[v] (4) at (4,0){};
		\draw[very thick, red] (1) to [bend left=60] (4);
		\draw[very thick, red] (1) to [bend left=45] (3);
		\draw[thick] (1) to [bend left=30] (2);
		\end{tikzpicture}
	}
	\phantom{{}-{}}=&\phantom{{}-{}}
	\vc{
		\begin{tikzpicture}[scale=0.7]
		\draw[thin, red] (0,0) to (5,0);
		\node[v] (1) at (1,0){};
		\node[v] (2) at (2,0){};
		\node[v] (3) at (3,0){};
		\node[v] (4) at (4,0){};
		\draw[thick, white] (1) to [bend left=60] (4);
		\draw[thick] (3) to [bend left=30] (4);
		\draw[very thick, blue] (1) to [bend left=45] (3);
		\draw[very thick, blue] (1) to [bend left=30] (2);
		\end{tikzpicture}
	}
	-
	\vc{
		\begin{tikzpicture}[scale=0.7]
		\draw[thin, red] (0,0) to (5,0);
		\node[v] (1) at (1,0){};
		\node[v] (2) at (2,0){};
		\node[v] (3) at (3,0){};
		\node[v] (4) at (4,0){};
		\draw[thick] (3) to [bend left=30] (4);
		\draw[very thick, green] (1) to [bend left=60] (4);
		\draw[very thick, green] (1) to [bend left=30] (2);
		\end{tikzpicture}
	}\\
	=&\phantom{{}-{}}
	\vc{
		\begin{tikzpicture}[scale=0.7]
		\draw[thin, blue] (0,0) to (5,0);
		\node[v] (1) at (1,0){};
		\node[v] (2) at (2,0){};
		\node[v] (3) at (3,0){};
		\node[v] (4) at (4,0){};
		\draw[thick, white] (1) to [bend left=60] (4);
		\draw[thick] (3) to [bend left=30] (4);
		\draw[thick] (2) to [bend left=30] (3);
		\draw[thick] (1) to [bend left=30] (2);
		\end{tikzpicture}
	}
	-
	\vc{
		\begin{tikzpicture}[scale=0.7]
		\draw[thin, blue] (0,0) to (5,0);
		\node[v] (1) at (1,0){};
		\node[v] (2) at (2,0){};
		\node[v] (3) at (3,0){};
		\node[v] (4) at (4,0){};
		\draw[thick, white] (1) to [bend left=60] (4);
		\draw[thick] (3) to [bend left=30] (4);
		\draw[thick] (1) to [bend left=60] (3);
		\draw[thick] (2) to [bend left=30] (3);
		\end{tikzpicture}
	}\\
	&{}-{}
	\vc{
		\begin{tikzpicture}[scale=0.7]
		\draw[thin, green] (0,0) to (5,0);
		\node[v] (1) at (1,0){};
		\node[v] (2) at (2,0){};
		\node[v] (3) at (3,0){};
		\node[v] (4) at (4,0){};
		\draw[thick, white] (1) to [bend left=60] (4);
		\draw[thick] (3) to [bend left=30] (4);
		\draw[thick] (2) to [bend left=60] (4);
		\draw[thick] (1) to [bend left=30] (2);
		\end{tikzpicture}
	}
	+
	\vc{
		\begin{tikzpicture}[scale=0.7]
		\draw[thin, green] (0,0) to (5,0);
		\node[v] (1) at (1,0){};
		\node[v] (2) at (2,0){};
		\node[v] (3) at (3,0){};
		\node[v] (4) at (4,0){};
		\draw[thick] (3) to [bend left=30] (4);
		\draw[thick] (1) to [bend left=60] (4);
		\draw[thick] (2) to [bend left=45] (4);
		\end{tikzpicture}
	}\\
	\vc{
		\begin{tikzpicture}[scale=0.7]
		\draw[thin] (0,0) to (5,0);
		\node[v] (1) at (1,0){};
		\node[v] (2) at (2,0){};
		\node[v] (3) at (3,0){};
		\node[v] (4) at (4,0){};
		\draw[thick] (1) to [bend left=60] (4);
		\draw[very thick, red] (1) to [bend left=45] (3);
		\draw[very thick, red] (1) to [bend left=30] (2);
		\end{tikzpicture}
	}
	\phantom{{}-{}}=&\phantom{{}-{}}
	\vc{
		\begin{tikzpicture}[scale=0.7]
		\draw[thin, red] (0,0) to (5,0);
		\node[v] (1) at (1,0){};
		\node[v] (2) at (2,0){};
		\node[v] (3) at (3,0){};
		\node[v] (4) at (4,0){};
		\draw[very thick, blue] (1) to [bend left=60] (4);
		\draw[thick] (2) to [bend left=30] (3);
		\draw[very thick, blue] (1) to [bend left=30] (2);
		\end{tikzpicture}
	}
	-
	\vc{
		\begin{tikzpicture}[scale=0.7]
		\draw[thin, red] (0,0) to (5,0);
		\node[v] (1) at (1,0){};
		\node[v] (2) at (2,0){};
		\node[v] (3) at (3,0){};
		\node[v] (4) at (4,0){};
		\draw[thick] (2) to [bend left=30] (3);
		\draw[very thick, green] (1) to [bend left=60] (4);
		\draw[very thick, green] (1) to [bend left=45] (3);
		\end{tikzpicture}
	}\\
	=&\phantom{{}-{}}
	\vc{
		\begin{tikzpicture}[scale=0.7]
		\draw[thin, blue] (0,0) to (5,0);
		\node[v] (1) at (1,0){};
		\node[v] (2) at (2,0){};
		\node[v] (3) at (3,0){};
		\node[v] (4) at (4,0){};
		\draw[thick, white] (1) to [bend left=60] (4);
		\draw[very thick, violet] (2) to [bend left=30] (3);
		\draw[very thick, violet] (2) to [bend left=45] (4);
		\draw[thick] (1) to [bend left=30] (2);
		\end{tikzpicture}
	}
	-
	\vc{
		\begin{tikzpicture}[scale=0.7]
		\draw[thin, blue] (0,0) to (5,0);
		\node[v] (1) at (1,0){};
		\node[v] (2) at (2,0){};
		\node[v] (3) at (3,0){};
		\node[v] (4) at (4,0){};
		\draw[very thick, orange] (2) to [bend left=45] (4);
		\draw[thick] (1) to [bend left=60] (4);
		\draw[very thick, orange] (2) to [bend left=30] (3);
		\end{tikzpicture}
	}\\
	&{}-
	\vc{
		\begin{tikzpicture}[scale=0.7]
		\draw[thin, green] (0,0) to (5,0);
		\node[v] (1) at (1,0){};
		\node[v] (2) at (2,0){};
		\node[v] (3) at (3,0){};
		\node[v] (4) at (4,0){};
		\draw[thick, white] (1) to [bend left=60] (4);
		\draw[thick] (3) to [bend left=30] (4);
		\draw[thick] (2) to [bend left=30] (3);
		\draw[thick] (1) to [bend left=45] (3);
		\end{tikzpicture}
	}
	+
	\vc{
		\begin{tikzpicture}[scale=0.7]
		\draw[thin, green] (0,0) to (5,0);
		\node[v] (1) at (1,0){};
		\node[v] (2) at (2,0){};
		\node[v] (3) at (3,0){};
		\node[v] (4) at (4,0){};
		\draw[thick] (1) to [bend left=60] (4);
		\draw[thick] (3) to [bend left=30] (4);
		\draw[thick] (2) to [bend left=30] (3);
		\end{tikzpicture}
	}\\
	=&\phantom{{}-{}}
	\vc{
		\begin{tikzpicture}[scale=0.7]
		\draw[thin, violet] (0,0) to (5,0);
		\node[v] (1) at (1,0){};
		\node[v] (2) at (2,0){};
		\node[v] (3) at (3,0){};
		\node[v] (4) at (4,0){};
		\draw[thick, white] (1) to [bend left=60] (4);
		\draw[thick] (3) to [bend left=30] (4);
		\draw[thick] (2) to [bend left=30] (3);
		\draw[thick] (1) to [bend left=30] (2);
		\end{tikzpicture}
	}
	-	
	\vc{
		\begin{tikzpicture}[scale=0.7]
		\draw[thin, violet] (0,0) to (5,0);
		\node[v] (1) at (1,0){};
		\node[v] (2) at (2,0){};
		\node[v] (3) at (3,0){};
		\node[v] (4) at (4,0){};
		\draw[thick, white] (1) to [bend left=60] (4);
		\draw[thick] (3) to [bend left=30] (4);
		\draw[thick] (2) to [bend left=60] (4);
		\draw[thick] (1) to [bend left=30] (2);
		\end{tikzpicture}
	}
	\\
	&{}-{}
	\vc{
		\begin{tikzpicture}[scale=0.7]
		\draw[thin, orange] (0,0) to (5,0);
		\node[v] (1) at (1,0){};
		\node[v] (2) at (2,0){};
		\node[v] (3) at (3,0){};
		\node[v] (4) at (4,0){};
		\draw[thick] (1) to [bend left=60] (4);
		\draw[thick] (3) to [bend left=30] (4);
		\draw[thick] (2) to [bend left=30] (3);
		\end{tikzpicture}
	}
	+
	\vc{
		\begin{tikzpicture}[scale=0.7]
		\draw[thin, orange] (0,0) to (5,0);
		\node[v] (1) at (1,0){};
		\node[v] (2) at (2,0){};
		\node[v] (3) at (3,0){};
		\node[v] (4) at (4,0){};
		\draw[thick] (3) to [bend left=30] (4);
		\draw[thick] (1) to [bend left=60] (4);
		\draw[thick] (2) to [bend left=45] (4);
		\end{tikzpicture}
	}\\
	&{}-
	\vc{
		\begin{tikzpicture}[scale=0.7]
		\draw[thin, green] (0,0) to (5,0);
		\node[v] (1) at (1,0){};
		\node[v] (2) at (2,0){};
		\node[v] (3) at (3,0){};
		\node[v] (4) at (4,0){};
		\draw[thick, white] (1) to [bend left=60] (4);
		\draw[thick] (3) to [bend left=30] (4);
		\draw[thick] (2) to [bend left=30] (3);
		\draw[thick] (1) to [bend left=45] (3);
		\end{tikzpicture}
	}
	+
	\vc{
		\begin{tikzpicture}[scale=0.7]
		\draw[thin, green] (0,0) to (5,0);
		\node[v] (1) at (1,0){};
		\node[v] (2) at (2,0){};
		\node[v] (3) at (3,0){};
		\node[v] (4) at (4,0){};
		\draw[thick] (1) to [bend left=60] (4);
		\draw[thick] (3) to [bend left=30] (4);
		\draw[thick] (2) to [bend left=30] (3);
		\end{tikzpicture}
	}
	\end{align*}
	\caption{\label{fig-proof}
		Graphical depiction of the calculation in the proof of Lemma~\ref{lemma-knbasis}. The two results are the same after canceling like terms.}
\end{figure}
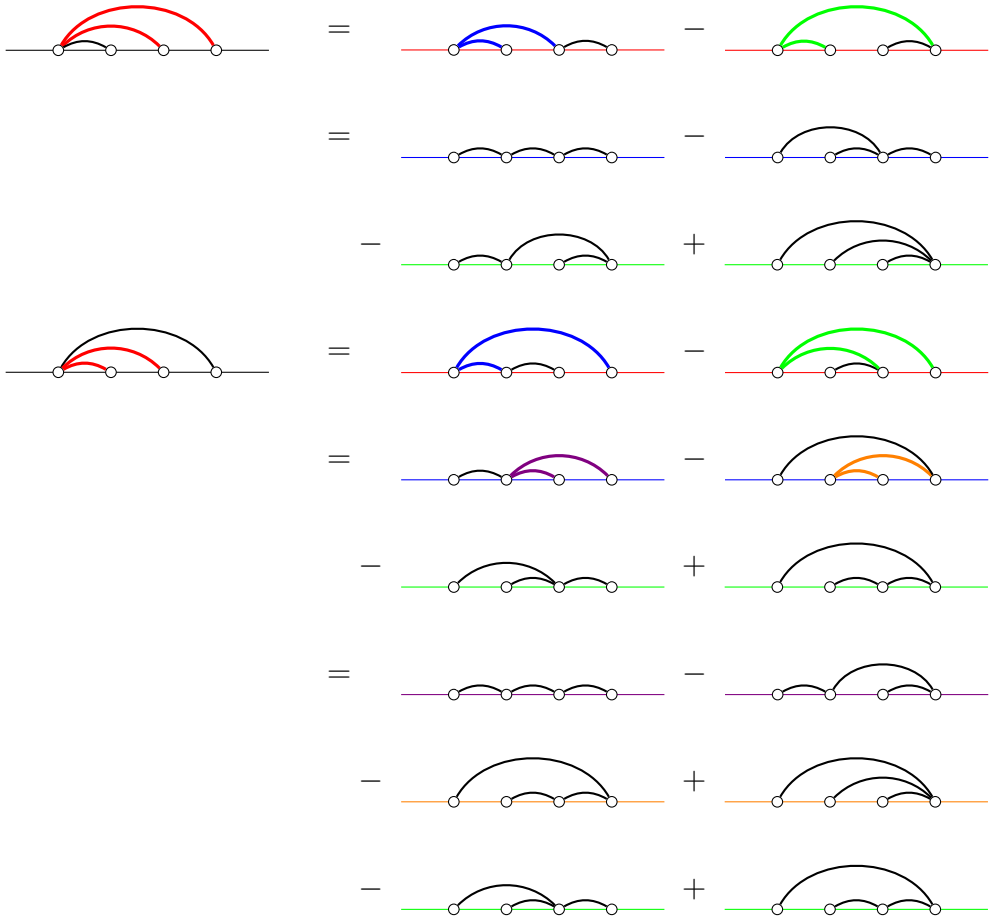

A similar calculation can be used to show that the reduction procedure will always give a unique result for any graph $G$. However, this is not enough to show that the $G$-reduced noncrossing trees give a basis, as there may a priori be other relations in $\FK_n^{\sigma}$ other than those of the form \eqref{eq-rel}. Instead, we will use the fact that $\FK_G^{\sigma} \subset \FK_n^{\sigma}$ to express $x_T$ when $T$ is a $G$-reduced tree in terms of $K_n$-reduced trees.
\subsection{Signature}

To identify noncrossing trees more easily, we associate to each one a signature.
\begin{defn}
The \emph{signature} of a noncrossing tree $T$ is the sequence $s(t) = (s_1, s_2, \dots, s_n)$ defined as follows: $s_1 = 1$; and, for $i>1$, $s_i = s_j$, where $j<i$ is minimum such that $\overline{ji} \in T$ if such a $j$ exists, otherwise $s_i=s_{i-1}+1$.
\end{defn}

For example, the signature of the noncrossing tree in Figure~\ref{fig-noncrossing} is $(1,1,2,3,2,1,2,1)$.

While the signature of a noncrossing tree is not unique, it is unique for $G$-reduced noncrossing trees.

\begin{lemma}\label{lemma-signature}
The $G$-reduced noncrossing trees have distinct signatures.
\end{lemma}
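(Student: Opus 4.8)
The plan is to show that the signature of a $G$-reduced noncrossing tree $T$, together with the graph $G$ itself, determines $T$; the lemma is then immediate. Write $s = s(T) = (s_1, \dots, s_n)$, and for each $i$ let $D_i = \{\, j < i : \overline{ji} \in T \,\}$ be the set of \emph{down-neighbors} of $i$ in $T$. Since every edge of $T$ is a down-edge at its larger endpoint, $T$ is recovered once all the $D_i$ are known, and $s_1, \dots, s_{i-1}$ depend only on the restriction $T|_{[i-1]}$ (they involve no edge with an endpoint $\ge i$). So it suffices to prove, by induction on $i$, that if $F := T|_{[i-1]}$ has already been recovered, then $D_i$ is determined by $F$, $s_i$, and $G$, using that $T$ is a $G$-reduced spanning tree.

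The first step is the observation that $s_i = s_{i-1} + 1$ if and only if $D_i = \emptyset$. The ``if'' direction is immediate from the definition. For the converse, note that if $\overline{ji} \in T$ with $j < i$, then the arc $\overline{ji}$ encloses the vertices $j+1, \dots, i-1$, so by the noncrossing condition every edge of $T$ incident to a vertex of $(j,i)$ has both endpoints in $[j,i]$; a short induction on $d \in [j, i-1]$ then gives $s_j \le s_d$, and in particular $s_j \le s_{i-1} < s_{i-1}+1$. Hence if $D_i \neq \emptyset$ then, writing $j_1 = \min D_i$, we get $s_i = s_{j_1} \le s_{i-1}$, so $s_i \neq s_{i-1}+1$. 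The same window argument shows that $s_{j_1} = \min(s_{j_1}, \dots, s_{i-1})$; combining this with $G$-reducedness identifies $j_1$ precisely. Indeed, if $j_1 < j' < i$ with $s_{j'} = s_{j_1}$, then (using that $s_d \ge s_{j_1}$ throughout the window) one can chase minimum down-neighbors from $j'$ down to $j_1$ to produce an increasing path in $F$ from $j_1$ to $j'$; so if in addition $\overline{j'i} \in G$ and $F \cup \{\overline{j'i}\}$ is noncrossing, then $\overline{j_1 i} \in T$, this path, and $\overline{j'i}$ witness a relation of the form \eqref{eq-rel}, contradicting that $T$ is $G$-reduced. Thus $j_1$ is the \emph{largest} $j < i$ with $s_j = s_i$, $\overline{ji} \in G$, and $F \cup \{\overline{ji}\}$ noncrossing.

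It remains to recover $D_i \setminus \{j_1\}$ once $j_1$ is known. The key points are: (i) the arc $\overline{j_1 i}$ traps every connected component of $F$ that lies entirely inside the interval $(j_1, i)$, so, since $T$ is a spanning tree, $D_i$ must meet each such component; (ii) $D_i$ meets each $F$-component at most once, since a second intersection would complete a cycle in $T$; and (iii) $D_i$ cannot contain a vertex $v$ from which there is an increasing path in $F$ to some $v' \in (v,i)$ with $\overline{v'i} \in G$ and the diagram still noncrossing, for this would again produce a relation of the form \eqref{eq-rel}. The main obstacle is making this last step precise: one must carry out a fairly delicate case analysis according to how the values $s_j$ repeat, which candidate edges $\overline{ji}$ lie in $G$, and which keep the picture noncrossing, and verify that in every case these constraints (together with $T$ being a spanning noncrossing $G$-reduced tree) leave precisely one admissible choice for $D_i$. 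Granting this, the induction gives $T = T'$ whenever $s(T) = s(T')$, which proves the lemma.
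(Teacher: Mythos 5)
Your overall strategy — show that a $G$-reduced noncrossing tree $T$ is recoverable from $s(T)$ and $G$ by reconstructing it vertex by vertex — is a reasonable one, but the proof as written has a genuine gap at its crux, and you acknowledge it yourself: after identifying $j_1 = \min D_i$, you state that one must carry out ``a fairly delicate case analysis'' to show that the constraints (i)--(iii) pin down the rest of $D_i$, and then say ``granting this, the induction gives\dots''. That conceded step is the whole content of the lemma; without it the argument is incomplete.

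There is also a subtler issue you do not address. The definition of $G$-reduced requires $T \cup \overline{i_{m-1} i_m}$ to be noncrossing, but at step $i$ of your induction you only know $F = T|_{[i-1]}$, and an edge $\overline{v' i}$ can cross an edge of $T \setminus F$ (one with an endpoint $> i$). In the single-down-edge setting this discrepancy can be argued away, but in your general setting, where $D_i$ may have several elements, it is not immediate that ``$F \cup \{\overline{v'i}\}$ noncrossing'' is the right surrogate for ``$T \cup \{\overline{v'i}\}$ noncrossing,'' and you would need to justify it.

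The paper sidesteps both difficulties with a structural reduction you do not use: the vertices with $s_i = 1$ partition $T$ into nested pieces, and the induced subtree between consecutive such vertices is again a noncrossing tree whose signature is obtained by subtracting $1$. This reduces the lemma, by induction, to the case $s(T) = (1,1,\dots,1)$, where every vertex $i>1$ has \emph{exactly one} down-edge. The reconstruction is then a one-line greedy argument (take the largest admissible $j$), and the multi-element $D_i$ case analysis — which is exactly where your proof stalls — never arises. If you want to salvage your approach, the cleanest fix is to adopt this nested decomposition first and then run your reconstruction only in the reduced case.
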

\begin{proof}
Note that $s_i=1$ if and only if the path from 1 to $i$ visits vertices in increasing order. For instance, we must have $s_n=1$ by the noncrossing condition. Moreover, if $i<j$ are consecutive vertices for which $s_i=s_j=1$, then none of the vertices between them can be connected to $s_i$, nor can they be connected to any vertices less than $i$ or greater than $j$ by the noncrossing condition. Hence the induced subgraph of $T$ on vertices $i+1, \dots, j$ is itself a noncrossing tree with signature $(s_{i+1}-1, s_{i+2}-1, \dots, s_{j-1}-1, 1)$.

We will show that the edges of $T$ connecting vertices $i$ with $s_i=1$ are completely determined by the condition of being $G$-reduced. Equivalently, we will show that there is a unique $G$-reduced tree with $s(T) = (1,1, \dots, 1)$ for any $G$. This will suffice: since the other edges lie in noncrossing subtrees with signatures as described above, these will also be determined by $s(T)$ by induction.

If $s(T) = (1, 1,\dots, 1)$, then there is exactly one edge $e_i$ of $T$ whose right endpoint is $i$ for $i>1$. We claim that there is at most one choice for $e_i$ given $e_1, \dots, e_{i-1}$. Suppose there are two edges $e_i = \overline{ji}$ and $e_i'=\overline{j'i}$ in $G$ with $j'<j$ that can be added to $e_1, \dots, e_{i-1}$ to give a noncrossing tree. Then adding $e_i'$ violates the condition of being $G$-reduced---in the definition of $G$-reduced, take $i_1=j'$, $i_{m-1}=j$, $i_m=i$, and let $i_1, i_2, \dots, i_{m-1}$ be the unique path from $j'$ to $j$ in $T$. It then follows that the only possibility for $e_i$ is when $j$ is maximum, so there is a unique $G$-reduced noncrossing tree for which $s(T)=(1,1, \dots, 1)$, as desired.
\end{proof}

We will need the following technical lemma about signatures.
\begin{lemma}\label{lemma-technical}
Let $\overline{ik} \in T$ be an edge of a noncrossing tree. Then for all $i<j<k$, $s_j \geq s_i$. If $j$ is closer to $k$ than $i$ in $T$ (that is, if the path from $j$ to $k$ in $T$ does not contain $i$), then $s_j \geq s_i+1$.
%
%
\end{lemma}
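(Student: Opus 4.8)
The plan is to prove the two inequalities together by strong induction on $j$, working directly from the recursive definition of the signature. Fix the edge $\overline{ik}\in T$ with $i<k$ (there is nothing to prove unless $i+1<k$). The only geometric input needed is a consequence of noncrossingness: every vertex $v$ with $i<v<k$ has all of its $T$-neighbors in $\{i,i+1,\dots,k\}$, since an edge $\overline{mv}$ with $m<i<v<k$, or with $i<v<k<m$, would cross $\overline{ik}$. In particular such a $v$ has no $T$-neighbor smaller than $i$, so the smallest $T$-neighbor of $v$, when it exists, lies in $\{i,\dots,v-1\}$.

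Now fix $i<j<k$ and assume the statement for every vertex strictly between $i$ and $j$. Write $L_j=\{\ell<j:\overline{\ell j}\in T\}$, which by the above is contained in $\{i,\dots,j-1\}$, and split into three cases. If $L_j=\emptyset$, then $s_j=s_{j-1}+1$; this equals $s_i+1$ when $j-1=i$, while if $j-1>i$ the inductive hypothesis gives $s_{j-1}\ge s_i$, so $s_j\ge s_i+1$ in either subcase and both claims hold. If $\min L_j=i$, then $s_j=s_i$, giving the first claim; and since the edge $\overline{ij}\ne\overline{ik}$ survives in $T\setminus\overline{ik}$, the vertices $i$ and $j$ lie in the same component, so the path from $j$ to $k$ passes through $i$ and the second claim is vacuous. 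Finally, if $m:=\min L_j$ satisfies $i<m<j$, then $s_j=s_m$, and because $\overline{mj}\ne\overline{ik}$ survives in $T\setminus\overline{ik}$, the vertices $m$ and $j$ lie in the same component of $T\setminus\overline{ik}$; the inductive hypothesis applied to $m$ (which is strictly between $i$ and $k$) gives $s_m\ge s_i$, with strict inequality whenever $m$, equivalently $j$, is closer to $k$ than $i$, and since $s_j=s_m$ the same holds for $j$. The base case $j=i+1$ is covered by the first two cases with no recursion.

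I expect the middle case to be the one subtle point: it is precisely the case in which $s_j$ equals $s_i$ rather than strictly exceeding it, so the argument must show that in exactly this case $j$ sits on the $i$-side of $\overline{ik}$, which is guaranteed by the presence of the edge $\overline{ij}\in T$. Everything else is routine bookkeeping — rephrasing "$j$ is closer to $k$ than $i$" as "$j$ and $k$ lie in the same component of $T\setminus\overline{ik}$", and using that any edge of $T$ other than $\overline{ik}$ keeps its two endpoints in a common component — and the noncrossing hypothesis is used only to confine the smallest smaller-neighbor of an interior vertex to the interval $(i,k)$.
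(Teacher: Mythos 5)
Your proof is correct. It rests on the same two geometric observations that drive the paper's argument: the noncrossing condition confines the left-neighbors of any vertex $j$ with $i<j<k$ to the interval $\{i,\dots,j-1\}$, and any edge of $T$ other than $\overline{ik}$ keeps its two endpoints in the same component of $T\setminus\overline{ik}$. But you organize the argument differently. The paper isolates the ``split point'' $p$ — the largest vertex below $k$ on the $i$-side of the cut — shows that $\{i,\dots,p\}$ and $\{p+1,\dots,k\}$ each induce noncrossing subtrees, and then chains $s_j\ge s_{p+1}=s_p+1=s_i+1$ for $j>p$. That chain leaves a couple of steps implicit, most notably that $s_p=s_i$ (which needs a small recursive argument of its own) and that $s_j\ge s_{p+1}$ for all $j$ in the right block. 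Your version replaces this block decomposition with a single strong induction on $j$ driven directly by the recursive definition of the signature, handling the three possibilities for $\min L_j$ ($\emptyset$, equal to $i$, strictly between $i$ and $j$) and observing that in the last case $j$ inherits both the signature value and the side of the cut from $\min L_j$. This makes both inequalities fall out simultaneously and avoids the implicit subclaims. The approaches are the same in substance; yours is a more explicit and self-contained packaging.
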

\begin{proof}
If $\overline{ik}$ is an edge, then the induced graph on $i, i+1, \dots, k$ is also a noncrossing tree. Since none of the vertices $j$ with $i<j<k$ have a neighbor to the left of $i$, we must have $s_j \geq s_i$.

Let $p$ be maximal such that  $i\leq p<k$ and $p$ is closer to $i$ than $k$ in $T$. Then by the noncrossing condition, the vertices $i, i+1, \dots, p$ are all closer to $i$ than $k$, and the vertices $p+1, \dots, k$ are all closer to $k$ than $i$, and both sets of vertices induce noncrossing trees. If $p+1 \leq j < k$, then $s_j \geq s_{p+1} = s_p+1 = s_i+1$.
\end{proof}

We now prove that the $G$-reduced noncrossing trees give a basis for all graphs $G$ by showing that the corresponding $x_T$ are linearly independent in $\FK_G^\sigma$.

\begin{thm} \label{thm-main2}
The set $\{x_T\}$, where $T$ ranges over all $G$-reduced noncrossing trees, is a basis of $\FK_G^\sigma$. Hence the relations in $\FK_G^\sigma$ are generated by those of the form \eqref{eq-rel}, and the reduction procedure always gives the unique expression of any element of $\FK_G^\sigma$ in terms of this basis.
\end{thm}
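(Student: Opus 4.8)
The plan is to deduce the theorem from Lemma~\ref{lemma-knbasis} by working inside the ambient space $\FK_n^\sigma \supseteq \FK_G^\sigma$, whose basis $\{x_{T'} : T' \text{ is } K_n\text{-reduced}\}$ is already known, and then to combine the resulting linear independence with the spanning statement of Lemma~\ref{lemma-spanning}. Fix the lexicographic order on signatures, and for a signature $\nu$ that occurs as the signature of some noncrossing tree write $\tau(\nu)$ for the (unique, by Lemma~\ref{lemma-signature} applied to $K_n$) $K_n$-reduced tree with $s(\tau(\nu)) = \nu$. The heart of the argument is the following triangularity statement: for every noncrossing tree $T$, expanding $x_T \in \FK_n^\sigma$ in the $K_n$-reduced basis gives $x_T = \pm\, x_{\tau(s(T))} + (\text{terms } x_{T'} \text{ with } s(T') >_{\mathrm{lex}} s(T))$, where in particular the coefficient of $x_{\tau(s(T))}$ is a unit. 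Granting this, linear independence of $\{x_T : T \text{ is } G\text{-reduced}\}$ follows by the standard leading-term argument: since $G$-reduced trees have pairwise distinct signatures by Lemma~\ref{lemma-signature}, in a hypothetical nontrivial vanishing combination, letting $\nu$ be the lexicographically smallest signature occurring among the summands with nonzero coefficient, the coefficient of $x_{\tau(\nu)}$ in the combination would be a unit times that summand's nonzero coefficient, which is impossible.

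To prove the triangularity statement I would induct on $T$ with respect to the lexicographic order on edge sets used in the proof of Lemma~\ref{lemma-spanning} (a total order on the finite set of noncrossing trees, so induction is legitimate). If $T$ is $K_n$-reduced there is nothing to show. Otherwise a relation of the form \eqref{eq-rel} applies, and in the notation of Proposition~\ref{prop-reduce} we may take $T = T_m$, so that $x_T = x_{T_1} - x_{T_2} - \cdots - x_{T_{m-1}}$; each $T_i$ has a lexicographically larger edge set than $T$ (the observation driving Lemma~\ref{lemma-spanning}), so the inductive hypothesis applies to all of them. The whole step then hinges on the single-step estimate $s(T_1) = s(T_m)$ together with $s(T_j) >_{\mathrm{lex}} s(T_m)$ for $2 \le j \le m-1$: expanding $x_{T_1}$ contributes $\pm\, x_{\tau(s(T_m))}$ plus signature-larger terms, each $x_{T_j}$ contributes only signature-larger terms, so in $x_T = x_{T_1} - \sum_{j\ge 2} x_{T_j}$ the term of minimal signature is $\pm\, x_{\tau(s(T_m))}$ with unit coefficient, which is the assertion (and shows $\tau(s(T))$ exists).

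Proving this single-step estimate is where I expect the real work to be, and it is where Lemma~\ref{lemma-technical} and the structure of the connected noncrossing graph $H$ (whose unique cycle is $C = (i_1,\dots,i_m)$) come in. The noncrossing condition, tested against the edges of $C$, prevents $i_j$ from having any lower neighbor below $i_{j-1}$ in $T_m$ for $2 \le j \le m-1$ (such an edge would either be a chord creating a second cycle in $H$ or would cross a cycle edge), so the minimal lower neighbor of $i_j$ in $T_m$ is exactly $i_{j-1}$; chasing this chain shows the signature is constant on $\{i_1,\dots,i_{m-1}\}$ in $T_m$, and one checks the same holds in $T_1$ and that no entry of the signature other than that at $i_m$ is affected by exchanging $\overline{i_1 i_m}$ for $\overline{i_{m-1}i_m}$, while at $i_m$ the value is $s_{i_1}$ in one tree and $s_{i_{m-1}}$ in the other (or a common vertex below $i_1$ in both) — whence $s(T_1) = s(T_m)$ by the constancy along the chain. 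For $2 \le j \le m-1$, deleting $\overline{i_{j-1}i_j}$ removes the unique cycle-edge lower neighbor of $i_j$; any lower neighbor of $i_j$ surviving in $T_j$ lies strictly between $i_{j-1}$ and $i_j$ and, in $T_m$, is closer to $i_j$ than to $i_{j-1}$, so Lemma~\ref{lemma-technical} raises $s_{i_j}$ by at least one while the earlier signature entries are unchanged, giving $s(T_j) >_{\mathrm{lex}} s(T_m)$. The delicate part is verifying these neighbor-set claims, i.e., that the noncrossing and unique-cycle hypotheses really do pin down the relevant lower neighbors.

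Finally, linear independence together with Lemma~\ref{lemma-spanning} shows $\{x_T\}$ over $G$-reduced $T$ is a basis of $\FK_G^\sigma$. The remaining assertions are then formal: any relation in $\FK_G^\sigma$ becomes, after reducing each $x_T$ by \eqref{eq-rel} to its expansion in the $G$-reduced basis, an identity among basis elements, so it is a consequence of the relations \eqref{eq-rel}; and since the reduction terminates (Lemma~\ref{lemma-spanning}) at a combination of $G$-reduced trees equal to the original element in the basis, that combination — hence the reduction's output — is unique, independent of the choices made.
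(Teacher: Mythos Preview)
Your proposal is correct and follows essentially the same approach as the paper: prove a triangularity statement by showing that, in the $K_n$-reduced basis of $\FK_n^\sigma$, the lex-minimal signature term of $x_T$ has the same signature as $T$ with unit coefficient, then invoke Lemma~\ref{lemma-signature} and Lemma~\ref{lemma-spanning}. The only notable difference is that the paper carries out the single-step signature analysis using just the three-term ($m=3$) reductions—which suffice in $\FK_n^\sigma$ by the proof of Lemma~\ref{lemma-knbasis}—so the neighbor analysis is shorter there, and the leading coefficient is seen to be exactly $+1$ rather than merely $\pm 1$.
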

\begin{proof}
Let $B=\{x_U\}$, where $U$ ranges over all $K_n$-reduced noncrossing trees, be the basis of $\FK_n^\sigma$ as shown in Lemma~\ref{lemma-knbasis}. Write $x_U \prec x_{U'}$ if $s(U)$ is less than $s(U')$ lexicographically.

We claim that for any noncrossing tree $T$, when $x_T$ is written in terms of the basis $B$, the leading term (minimal with respect to $\prec$) will be $x_U$ with coefficient 1, where $U$ has the same signature as $T$. By Lemma~\ref{lemma-signature}, it will follow that when $T$ is a $G$-reduced noncrossing trees, the $x_T$ will have distinct leading terms, so they will be linearly independent.

Suppose we reduce $x_T$ in $\FK_n^\sigma$ using \eqref{eq-rel} for $m=3$. (By the proof of Lemma~\ref{lemma-knbasis}, these are the only relations we need to reduce $T$ to $K_n$-reduced noncrossing trees.) Write $x_T = x_{T'}-x_{T''}$, where $T' = (T \cup \overline{jk}) \backslash \overline{ik}$ and $T'' = (T \cup \overline{jk}) \backslash \overline{ij}$ for some $i<j<k$.

We first show that $s(T)=s(T')$. Clearly the first $k-1$ terms of $s(T)$ and $s(T')$ coincide. If the leftmost neighbor of $k$ in $T$ is not $i$, then this neighbor is still leftmost in $T'$, so $s_k$ does not change. If the leftmost neighbor of $k$ in $T$ is $i$, then the leftmost neighbor of $k$ in $T'$ is $j$, but since $s_i=s_j$, we have $s_i=s_j=s_k$ in both cases. It follows that $s(T)=s_(T')$.

We next show that $s(T) \prec s(T'')$. Again the first $j-1$ terms of $s(T)$ and $s(T'')$ coincide. In $T$, the leftmost neighbor of $j$ is $i$, so $s_j(T)=s_i(T)$. In $T''$, $j$ is closer to $k$ than $i$, so by Lemma~\ref{lemma-technical}, $s_j(T'') > s_i(T'') = s_i(T) = s_j(T)$. Hence $s(T) \prec s(T'')$.

Therefore at each step of the reduction, the leading term keeps the same signature and has coefficient 1. Thus once all reductions are performed, the signature of the leading term will remain the same and the coefficient will still be 1. Thus the $x_T$ are linearly independent, so they form a basis by Lemma~\ref{lemma-spanning}. This also shows that the only relations in $\FK_G^\sigma$ are the ones used in Lemma~\ref{lemma-spanning}, which are all of the form \eqref{eq-rel}.
\end{proof}

Note that Theorem~\ref{thm-main2} shows that all nontrivial relations in $\FK_G^s$ are implied by relations of the form given in Proposition~\ref{prop-cycrel}. Hence, by passing to the commutative quotient and combining these relations for all $\sigma$, this can be used to give an alternative proof of Theorem~\ref{thm-main}.

\section{Conclusion}
We have shown that $\FK_G^{ab}$ is isomorphic to the Orlik-Terao algebra $U_G$, which suggests that $\FK_G^s$ can be thought of as a noncommutative analogue of $U_G$. Since $U_G$ can be defined for any hyperplane arrangement, it may be possible to similarly describe a noncommutative analogue of $\FK_G^s$ for more general hyperplane arrangements.

The strategy of describing the monomials of $\FK_G^\sigma$ up to commutation relations in terms of noncrossing trees as in Proposition~\ref{prop-tree} can also be extended to other graded pieces of $\FK_G$; however, one must pass to noncrossing graphs on surfaces of higher genus (similar to the notion of combinatorial maps on surfaces). It has yet to be seen whether this geometric description can be helpful to derive algebraic consequences for nonsimple monomials.

When $G$ is the complete graph $K_n$ and $\sigma$ is an $n$-cycle, the dimension of $\FK_G^\sigma$ is the Catalan number $C_{n-1}$. It would be interesting to see if the dimension of $\FK_G^\sigma$ has combinatorial properties for other
graphs $G$.

\section{Acknowledgments}
The author would like to thank Karola M\'esz\'aros, John Stembridge, and Jonah Blasiak for useful conversations. This work was partially supported by an NSF Postdoctoral Research Fellowship DMS 1004375.

\bibliography{abelianization}
\bibliographystyle{plain}

\end{document}